\newcommand{\hx}{\hat{\x}}
\newcommand{\hT}{\hat{T}}
\newcommand{\hphi}{\hat{\phi}}
\newcommand{\hi}{\hat{i}}
\newcommand{\MBu}{\MB{u}}
\newcommand{\MBv}{\MB{v}}
\newcommand{\hMBB}{\hat{\MB{B}}}
\newcommand{\MBA}{\MB{A}}
\newcommand{\Mu}{\M{u}}
\newcommand{\Mv}{\M{v}}
\newcommand{\hMB}{\hat{\M{B}}}
\newcommand{\MA}{\M{A}}
\newcommand{\skipifemptyarg}[1]{\ifthenelse{\isempty{#1}}{}{\left[#1\right]}}
\newcommand{\skipifscalar}[1]{\ifthenelse{\isempty{#1}}{}{;#1}}
\newcommand{\bs}[1]{\boldsymbol{#1}}
\newcommand{\scal}[2]{\bigl(#1,#2\bigr)}
\newcommand{\set}[1]{\mathbb{#1}} 
\newcommand{\M}[1]{\mathsf{#1}} 
\newcommand{\MB}[1]{\bs{\M{#1}}} 
\newcommand{\V}[1]{{\bs{#1}}} 
\newcommand{\T}[1]{\bs{#1}} 
\newcommand{\x}{\V{x}}
\newcommand{\y}{\V{y}}
\newcommand{\sV}{\mathcal{V}}
\newcommand{\sW}{\mathcal{W}}
\newcommand{\Di}{\mathcal{D}}
\newcommand{\alp}{\ensuremath{\alpha}}
\newcommand{\del}{\ensuremath{\delta}}
\newcommand{\sR}{\set{R}}
\newcommand{\sN}{\set{N}}
\newcommand{\D}[1]{\,{\mathrm d}#1}
\newcommand{\sep}{\,|\,}
\newcommand{\bilf}[2]{a(#1, #2)}
\newcommand{\mesh}{\mathcal{M}} 
\newcommand{\meshN}{\mathcal{M}_\VN} 
\newcommand{\sP}{\mathcal{P}} 
\newcommand{\fC}{\mathcal{C}} 
\newcommand{\fD}{\mathcal{D}} 
\newcommand{\VN}{{\V{N}}}
\DeclareMathOperator{\nnz}{nnz}
\DeclareMathOperator{\nrows}{nrows}
\newcommand{\diag}{^\mathrm{DoGIP}}
\newcommand{\Vv}{{\V{v}}}
\newcommand{\Vu}{{\V{u}}}
\DeclareMathOperator{\mem}{mem}
\theoremstyle{plain}
\newtheorem{theorem}{Theorem}
\newtheorem{definition}[theorem]{Definition}
\newtheorem{lemma}[theorem]{Lemma}
\newtheorem{corollary}[theorem]{Corollary}
\newtheorem{remark}[theorem]{Remark}
\title{Double-grid quadrature with interpolation-projection (DoGIP) as a novel discretisation approach: An application to FEM on simplexes}
\author[1]{Jaroslav~Vondřejc}
\affil[1]{Technische Universit\"{a}t Braunschweig, Institute of Scientific Computing, Mühlenpfordtstrasse~23, 38106 Braunschweig, Germany, \href{mailto:j.vondrejc@tu-bs.de}{j.vondrejc@tu-bs.de}, \href{mailto:vondrejc@gmail.com}{vondrejc@gmail.com}}
\date{\today}
\begin{document}
\maketitle

\begin{abstract}
This paper is focused on the double-grid integration with interpolation-projection (DoGIP), which is a novel matrix-free discretisation method of variational formulations introduced for Fourier--Galerkin approximation.
Here, it is described as a more general approach with an application to the finite element method (FEM) on simplexes.
The approach is based on treating the trial and a test function in variational formulation together, which leads to the decomposition of a linear system into interpolation and (block) diagonal matrices.
It usually leads to reduced memory demands, especially for higher-order basis functions, but with higher computational requirements.
The numerical examples are studied here for two variational formulations: weighted projection and scalar elliptic problem modelling, e.g. diffusion or stationary heat transfer.
This paper also opens a room for further investigation, which is discussed in the conclusion.

\textbf{Keywords:} finite element method; discretisation; interpolation; projection; computational effectiveness
\end{abstract}

\section{Introduction}
\label{sec:introduction}
A discretisation of non-linear partial differential equations, after linearisation, leads to the solution of linear systems.
Often, they require excessive memory, especially when the system matrices are fully populated and no special structure is incorporated.
This paper focuses on an alternative matrix-decomposition approach that is based on a double-grid integration with interpolation-projection operator (DoGIP).
This method, already used for numerical homogenisation with the Fourier--Galerkin method \cite{VoZeMa2014GarBounds,Vondrejc2015FFTimproved,Vondrejc2016DoGIP}, belongs to the matrix-free approaches with possible applicability to the discretisation of variational problems with various approximation spaces.
Here, the focus is aimed at the finite element method (FEM) on simplexes (e.g.\ triangles or tetrahedra).
\paragraph{Introduction of the DoGIP idea}
As a model problem, an abstract variational setting discretised with Galerkin approximation is assumed: find trial function $u$ from some finite dimensional vector space $\sV$ such that
\begin{align*}
a(u,v) = F(v)
\quad\forall v\in \sV.
\end{align*}
This problem, which naturally arises in many engineering applications, is defined via a positive definite continuous bilinear form $a:\sV\times \sV\rightarrow\sR$ and linear functional $F:\sV\rightarrow\sR$ representing a source term.
The solution, 
which is expressed as a linear combination of basis functions $u(\x)=\sum_{i=1}^V \Mu_i\phi_i(\x)$ with $V=\dim\sV$, is determined by the vector of coefficients $\MBu = (\Mu_i)_{i=1}^V$ that can be calculated from a corresponding linear system
\begin{align*}
\MB{A} \MBu &= \MB{b},
&
\text{where }
\MB{A}= \bigl( a(\phi_j, \phi_i) \bigr)_{i,j=1}^V \in\sR^{V\times V}
\text{ and }
\MB{b}=\bigl(F(\phi_i)\bigr)_{i=1}^V\in\sR^{V}.
\end{align*}

The properties of the matrix $\MB{A}$ are fundamental for the decision about a linear solver and consequently for computational and memory requirements.
The FEM builds on the basis functions with a local support, which results in a sparse matrix.

The idea in the DoGIP approach, first used for the spectral Fourier--Galerkin method \cite{Vondrejc2015FFTimproved,VoZeMa2014GarBounds}, is based on expressing the trial and the test function as a product on a double-grid space, i.e.\ $uv\in\sW$ or $\nabla u\odot\nabla v$ in the case of the weighted projection and the scalar elliptic equation with differential operator.
Particularly, the double-grid space $\sW$ is composed of the polynomials with the doubled order.

The DoGIP approach allows to decompose the linear system
\begin{align*}
\MB{A} &= \MB{C}^*\MB{A}\diag \MB{B},
&
\text{where }
\MB{A}\diag\in\sR^{W\times W},\MB{B},\MB{C}\in\sR^{W\times V} \text{ with }W>V.
\end{align*}
Here, the diagonal (or block-diagonal) matrix $\MB{A}\diag$ expresses the weights obtained by integration of material-like coefficients with respect to a basis of the double-grid space.
Despite the bigger double-grid size $W>V$, the memory requirements to store $\MB{A}\diag$ are typically reduced compared to the original matrix $\MB{A}$. Usually it is impossible to fully avoid assembling matrix $\MBA$ because of the high number of integration points for general material coefficients, whose evaluation can be also computationally demanding.

Matrices $\MB{B},\MB{C}$, that can equal one another, represent interpolation-projection operators from the original to the double-grid space and depend on the approximation spaces only.
Those matrices are not stored because the required matrix-vector multiplication can be substituted with an efficient numerical routine.
Particularly, the interpolation-projection matrix in the Fourier--Galerkin method is evaluated very efficiently with the fast Fourier transform using $\mathcal{O}(N\log{}N)$ operations \cite{Vondrejc2015FFTimproved,Vondrejc2016DoGIP}.
In case of FEM, only the interpolation on the reference element is needed.

\paragraph{Alternative approaches.}
There are also many other matrix-free evaluations of discretised differential operators \cite{Orszag1980,Kronbichler2012,Cantwell2011,Deville2002book,Huismann2017}, which often incorporate orthogonal basis functions within the spectral methods.
Instead of the assembling of the system matrix $\MBA$, these approaches typically rely on the evaluation of integrals on the fly by fast sum factorisation techniques over numerical integration points.
It can be demonstrated for the generalised Laplacian with positive definite material coefficients $\T{A}$, i.e.
 $\bilf{u}{v} = \sum_T\int_T\T{A}(\x)\nabla u(\x)\cdot \nabla v(\x)\D{\x}$ expressed over elements $T$ of a FEM mesh.
This formulation invokes the element-wise evaluation of the system matrix-vector product
$\MBA\MBu = \sum_T\MB{P}_T^*\MBA_T\MB{P}_T\MBu$, where the operator $\MB{P}_T$ extracts the local degrees of freedom $\MBu_T$ from the global vector $\MBu$.
Then the local matrix-vector multiplication $\MBA_T\MBu_T$ is obtained sequentially by evaluating the integral over integration points $\x_q$ with corresponding weights $w_q$, i.e.
\begin{align*}
\nabla u(\x_q) &= J_T^{-\mathrm{T}}(\hx_q)\sum_i\widehat{\nabla}\widehat{\phi}_i(\hx_q)\Mu_{T,i},
\\
(\MBA_T\MBu_T)_l&= \sum_q \T{A}(\x_q) \nabla u(\x_q)\cdot \nabla \phi_l(\x_q) w_q |\det J_T(\x_q)|,
\end{align*}
where the quantities with the hat $\widehat{\cdot}$ are expressed on the reference element.
The matrix-vector product is thus composed of the interpolation of the approximate solution over quadrature points, evaluating Jacobians, material coefficients, integration weights, and finally the adjoint interpolation.

\emph{Discretisation.}
The effectiveness of these approaches depends mainly on the choice of basis functions and of integration points.
A traditional approach builds on the Lagrange basis functions, which can be efficiently combined with a special integration technique.
Particularly the location of the Lagrange basis at the Gauss-Lobatto quadrature points can significantly reduce the computational cost required for the interpolation \cite{Kronbichler2012,Huismann2017,Hesthaven1998}.
The spectral approaches \cite{Gottlieb1977,Boyd2000book,Canuto2006SM_book} build on orthogonal systems of basis functions, which are generally defined on regular domains, e.g. squares or cubes.
The application of the spectral methods to complex geometries \cite{Orszag1980} can utilise a mapping to a regular domain or a multipatch idea leading to the so-called spectral element method \cite{Huismann2017,Pasquetti2006,Karniadakis2013,Karniadakis1990} or fictitious domain methods, see \cite{Cantwell2011,Vos2008,Vos2010} for a comparison.

A general idea that can significantly reduce the computational costs incorporates the tensor product structure of the basis functions $\phi(\x)=\prod_{i=1}^d \phi(x_i)$, which allows to apply interpolation or differentiation operators separately in each direction.
Less attention has been given to the fast evaluation strategies and matrix-free approaches on simplexes (e.g.\ triangles and tetrahedra) because the crucial tensor product structure is more difficult to apply as it requires an additional transform from the simplex to the $d$-dimensional cube \cite{Dubiner1991,Sherwin1995}.
The matrix-free approaches have to be also accompanied with a suitable iterative solver \cite{Luo1998,May2015a,Crivellini2011}. Particularly, an effective parallel implementation is usually required \cite{Alauzet2016}.

\emph{Numerical quadrature.}
Here, integration schemes, over which the matrix-free approaches interpolate, are discussed.
Typically special quadrature can be utilised when an additional information about the integrand is provided.
Particularly, the Gauss-Legendre integration with $k$ points can be used to exactly integrate a polynomial integrand of order up to $2k-1$ in 1D; the corresponding tensor product integration can be used in higher dimensions.
As a general rule, a quadrature is chosen in order to keep the approximation order of the discretised variational formulations.
Namely the consistency error in the first Strang's lemma has to be controlled, which typically assumes a smooth integrand, see \cite{Strang1972varcrime} and \cite[Section~26]{Ciarlet1991HNAFEM}.

The development of an integration rule on the simplexes is more complicated and has been studied in many publications, e.g.\ in \cite{Silvester1970,Grundmann1978,Dunavant1985,Wandzurat2003,Taylor2007,Zhang2009,Witherden2015}.
The Gaussian tensor product integration rule that is degenerated to the simplex leads to the asymmetric distribution of the points with clustering of the points close to one of the vertices.
Although those rules can be applicable to polynomial integrands, a general quadrature requires that (i) points are symmetrically distributed inside of the domain, (ii) integration weights are all positive, and (iii) the truncation error is minimised.
A general approach to compute quadrature points and weights with the required properties can be found in the recent publication \cite{Witherden2015}, which also provides an improvement of the previous integration rules \cite{Dunavant1985,Zhang2009}.
However, an optimal quadrature with respect to the position and the number of the integration points remains an open question.

Although the above discussed quadratures are exact for the polynomials up to some order, they also perform well with general smooth integrands \cite{Novak1996,Trefethen2008,Xiang2012}.
However, their performance is poor for functions with singularities or discontinuities, which occur naturally when the material coefficients under the integral are heterogeneous or when a finite element intersects the boundary of the computational domain.
Those problems appear in the spectral element \cite{Orszag1980,Vos2010} or in the finite cell method \cite{Duster2008,Yang2012,Abedian2013b}
as they avoid the error-prone mesh generation of complicated domains.
Therefore the adaptive integration of rough integrands is needed and can be provided by e.g.\ the local refinement strategy, which allow to keep the exponential convergence \cite{Duster2008,Yang2012,Abedian2013b}.
Nevertheless, the higher number of integration points is required.

\paragraph{Applicability of the DoGIP.} This paper shows that DoGIP is a promising matrix-free approach with various applications.
The method benefits from the fact that only the matrix $\MB{A}\diag$ is stored, which can significantly reduce memory requirements especially for higher order basis functions, and that the interpolation-projection matrix $\MB{B}$ is independent of material coefficients or mesh distortion, which allows an optimisation for fast matrix-vector multiplication.

The method is suitable for discretisation methods with higher order polynomial approximations ($p$-version of FEM \cite{Cantwell2011}, spectral methods \cite{Boyd2000book,Canuto2006SM_book}, finite cell method \cite{Dauge2015}, discontinuous Galerkin methods \cite{Kronbichler2017}, etc.).
Special benefits can be obtained for variational problems when the system matrix has to be solved many times such as in numerical homogenisation \cite{VoZeMa2014FFTH,Schneider2016hexa}, multiscale problems \cite{Ibrahimbegovic2014,Matous2016,Kanoute2009,Abdulle2012a,Gokuzum2017}, optimisation \cite{Sigmund2013approaches}, uncertainty quantification \cite{Ernst2010,Matthies2005}, inverse problems \cite{Stuart2010,VoMa2018CdE}, time-dependent problems \cite{Abedian2013a}, or fluid dynamics \cite{Deville2002book,Fehn2018fluid}.

\paragraph{Outline of the paper}
In section~\ref{sec:dogip-within-finite-element-method}, the methodology of the DoGIP is described for two problems: a weighted projection and a scalar elliptic problem modelling diffusion, stationary heat transfer, etc. Then in section~\ref{sec:numerical-results}, the numerical results are presented along with the discussion of the performance of the DoGIP with respect to the existing matrix-free approaches.

\paragraph{Notation}

In the following text, $L^2(\Omega)$ denotes the space of square Lebesgue-integrable functions $f:\Omega\rightarrow\sR$, where the computational domain $\Omega$ is a bounded open set in $\sR^d$ ($d=2$ or $3$).
The vector-valued and matrix-valued variants of $L^2(\Omega)$ are denoted as $L^2(\Omega;\sR^d)$ and $L^2(\Omega;\sR^{d\times d})$. 
A Sobolev space $H^1(\Omega)$ is a subspace of $L^2(\Omega)$ with gradients in $L^2(\Omega;\sR^d)$. Vector and matrix-valued functions are denoted in bold with lower-case $\V{u}$ and upper-case letters $\T{A}$.
The vectors and matrices storing the coefficients of the discretised vectors are depicted with boldface upright characters $\MBu, \MB{A}$.

For vectors $\Vu,\Vv\in\sR^d$ and matrices $\V{A},\V{B}\in\sR^{d\times d}$, we define the inner product for vectors $\Vu \cdot \Vv\in\sR$ and matrices $\V{A}:\V{B}\in\sR$, and the outer product of two vectors $\Vu \otimes \Vv\in\sR^{d\times d}$ as
\begin{align*}
\Vu \cdot \Vv &= \sum_{i=1}^d u_iv_i,
&
\V{A}:\V{B} &= \sum_{i,j=1}^dA_{ij}B_{ij},
&
\Vu \otimes \Vv &= (u_iv_j)_{i,j=1}^d.
\end{align*}

\section{DoGIP within the finite element method}\label{sec:dogip-within-finite-element-method}

\subsection{FEM on simplexes with Lagrange basis functions}\label{sec:fem-on-simplexes-with-lagrange-basis-functions}

The model problem is considered on an FEM mesh $\mesh(\Omega)$ composed of simplexes (e.g.\ triangles or tetrahedra) with standard properties (non-overlapping elements, no hanging nodes, regular shapes). Vector $\VN$ denotes a discretisation parameter.
The FEM spaces involve polynomials of order $k$ (denoting the highest possible order)
$$\sP_k = \{p:\sR^d\rightarrow\sR \sep p(\x) = \sum_{\V{\alp}\in\sN^d_0,\|\V{\alp}\|_1\leq k} a_{\V{\alpha}} x_1^{\alpha_1}x_2^{\alpha_2}\dotsc x_d^{\alpha_d}, a_{\V{\alpha}}\in\sR\}$$
on each element. Depending on the trial and test spaces (e.g. $H^1(\Omega)$, $L^2(\Omega)$, etc.), the FEM spaces differ in the continuity requirement on the facets\footnote{Facet is a mesh entity of codimension 1, i.e.\ edge in 2D and face in 3D.} of the elements. Here, we take into account continuous and discontinuous finite element spaces
\begin{align*}
\fC_{\VN,k} &= \{ v:\Omega\rightarrow\sR \sep v|_T \in \sP_k\quad\forall T\in\mesh(\Omega) \text{ and }v \text{ is continuous}\},
\\
\fD_{\VN,k} &= \{ v:\Omega\rightarrow\sR \sep v|_T \in \sP_k\quad\forall T\in\mesh(\Omega)\}.
\end{align*}

Each approximation function $v\in\fC_{\VN,k}$ can be expressed as a linear combination 
\begin{align*}
v(\x) = \sum_{i=1}^{\dim \fC_{\VN,k}} \Mv_i \phi^i(\x)
\end{align*}
of the Lagrange basis functions (shape functions) $\phi^i:\Omega\rightarrow\sR$.
The coefficients $\M{v_i}$, as the degrees of freedom, are the function values at the nodal points $\x^i_{\fC_{\VN,k}}$, i.e.\ $\M{v_i}=v(\x^i_{\fC_{\VN,k}})$. Those points are suitable for the interpolation of functions, thanks to the Dirac delta property of basis functions
\begin{align*}
\phi^j(\x^i_{\fC_{\VN,k}}) = \delta_{ij}.
\end{align*}

A technical problem arises in the case of discontinuous approximation space $\fD_{\VN,k}$ because the function values have jumps at the boundary of the elements.
Therefore, the function values of $v\in\fD_{\VN,k}$ at $\x^i_{\fD_{\VN,k}}$ are understood as a continuous extension from the element supporting the corresponding $i$-th basis function, i.e.
\begin{align*}
v(\x^i_{\fD_{\VN,k}}) = \lim_{\substack{\x\rightarrow \x^i_{\fD_{\VN,k}}\\\text{for } \x\in T_i}} v(\x),\quad
\text{where $T_i$ is an element supporting $\phi^i$}.
\end{align*}
In the latter text, the function values at the nodal points will be used in the above sense.

\emph{Reference element.} It is assumed that all of the elements $T$ are derived from a reference element $\hT$ using affine mapping $F_T(\hx):\hT\rightarrow T$ defined as $F_T(\hx)=R_T\hx+S_T$ with invertible matrices $R_T\in\sR^{d\times d}$ and vectors $S_T\in\sR^d$. Consistently the objects on a reference element are denoted with a hat.
Global basis functions $\phi_i:\Omega\rightarrow\sR$ of a FEM space $\sV$ are derived from a reference shape functions $\hphi_{\hat{i}}:\hT\rightarrow\sR$ by a composition with a mapping $F_T$, i.e. $\phi_i(\x) = \hphi_{\hi}(F_T^{-1}(\x))$ for $\x\in T$. 
The relation between local $\hat{i}$ and global index $i=l_T(\hat{i})$ is given with an injective map
$$l_T:\{1,\dotsc,V_T\}\rightarrow \{1,\dotsc,V\},\quad\text{where }V=\dim \sV\text{ and }V_T=\dim \sV|_T = \dim \sP_k.$$
The derivative of basis functions is related with the formula obtained by a chain rule
\begin{align}
\label{eq:bas_deriv}
\frac{\partial\phi(\x)}{\partial x_i} &= \frac{\partial \hphi(F^{-1}(\x))}{\partial x_i} = \sum_{p=1}^d \frac{\partial \hphi(F^{-1}(\x))}{\partial \hat{x}_p} (R_T^{-1})_{pi}.
\end{align}

Later on we also use a discretisation operator $\Di_\sV:\sV\rightarrow\sR^V$ that assigns to each approximation function from space $\sV$ a corresponding vector of coefficients, i.e.\
$\Di_\sV[v] = \MBv = (v(\x_\sV^i))_{i=1}^V$ with vectors denoted with upright sans-serif font.
Note that it is a one-to-one mapping between the space of functions $\sV$ and the space of coefficients $\sR^V$. Similarly we will also use the local degrees of freedom on an element $T$ as $\MBv_T = \bigl( \Mv_{l_T(\hi)}  \bigr)_{\hi=1}^{V_T}\in\sR^{V_T}$.

\subsection{Weighted projection}\label{sec:weighted-projection}

Here, the DoGIP is introduced for the problem of the weighted projection.
\begin{definition}[Weighted projection]
 Let $\sV$ be a finite dimensional approximation space, $m:\Omega\rightarrow\sR$ a uniformly positive integrable function representing e.g.\ a material field, and $f\in L^2(\Omega)$ a function representing e.g. a source term.
 Then we define the bilinear form $a:\sV\times\sV\rightarrow\sR$ and the linear functional $F:\sV\rightarrow\sR$ as
 \begin{align}
 \label{eq:bilf_gp}
 a(u,v) &= \int_\Omega m(\x) u(\x) v(\x) \D{\x},
 &
 F(v) &= \int_\Omega f(\x) v(\x) \D{\x}.
 \end{align}
 Then the weighted projection represents the problem:
 Find trial function $u\in\sV$ such that
 \begin{align*}
 a(u,v) = F(v)\quad\forall v\in\sV.
 \end{align*}
\end{definition}
In the standard approach, the solution of weighted projection $u=\sum_{i=1}^V \Mu_i \phi^i_\sV$  with $V=\dim \sV$ is expressed as a linear combination with respect to the basis of the space $\sV$. Then, the vector of coefficients $\MBu=(\Mu_i)_{i=1}^V$ is determined from the following square linear system
\begin{subequations}
 \label{eq:linsys_gp}
 \begin{align}
 \MB{A}\MBu &= \MB{b}\in\sR^V
 &\text{where }\M{A}_{ij} &= \int_\Omega m(\x) \phi^j_\sV(\x)\phi^i_\sV(\x)\D{\x},
 \\
 &&
 \M{b}_i &= \int_\Omega f(\x) \phi^i_\sV(\x)\D{\x}.
 \end{align}
\end{subequations}

The alternative DoGIP approach relies on the incorporation of the double grid space.
\begin{lemma}[Double-grid space]
 Let the functions $u,v$ be from a space $\sV=\fC_{\VN,k}$ and let $\sW=\fC_{\VN,2k}$.
 Then
 \begin{align*}
 u &\in\sW,
 &
 u v &\in\sW.
 \end{align*}
\end{lemma}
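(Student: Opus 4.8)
The plan is to verify, for each of the two claimed memberships, the two defining conditions of $\sW = \fC_{\VN,2k}$ separately: global continuity on $\Omega$, and the local polynomial restriction $\cdot|_T \in \sP_{2k}$ on every element $T \in \mesh(\Omega)$.

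First I would treat $u \in \sW$. Continuity is immediate, since $u \in \fC_{\VN,k}$ is continuous by definition and $\fC_{\VN,2k}$ imposes exactly the same continuity requirement. For the local condition I would invoke the inclusion $\sP_k \subseteq \sP_{2k}$, which holds because $k \le 2k$: writing a monomial as $\x^{\V{\alpha}} := x_1^{\alpha_1}\dotsc x_d^{\alpha_d}$, any $\V{\alpha}$ with $\|\V{\alpha}\|_1 \le k$ also satisfies $\|\V{\alpha}\|_1 \le 2k$. Hence $u|_T \in \sP_k \subseteq \sP_{2k}$ on each $T$, and combined with continuity this gives $u \in \sW$.

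The product $uv$ requires the degree bookkeeping step. Continuity follows because the product of two continuous functions is continuous, so $uv$ is continuous on $\Omega$. For the local condition I would fix an element $T$ and write $(uv)|_T = (u|_T)\,(v|_T)$, a product of two polynomials in $\sP_k$. Expanding both factors into monomials and multiplying, a typical term has the form $\x^{\V{\alpha}}\,\x^{\V{\beta}} = \x^{\V{\alpha}+\V{\beta}}$ with $\|\V{\alpha}\|_1 \le k$ and $\|\V{\beta}\|_1 \le k$; since the multi-index entries are nonnegative, $\|\V{\alpha}+\V{\beta}\|_1 = \|\V{\alpha}\|_1 + \|\V{\beta}\|_1 \le 2k$. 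Thus every term of $(uv)|_T$ is a monomial of total degree at most $2k$, i.e.\ $(uv)|_T \in \sP_{2k}$, and together with continuity this yields $uv \in \fC_{\VN,2k} = \sW$.

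There is no genuine obstacle here; the double-grid space $\fC_{\VN,2k}$ is constructed precisely so as to absorb such products, and the statement is essentially a check that the definitions line up. The only point demanding attention is the total-degree count for the product of multivariate polynomials, which is settled by the additivity $\|\V{\alpha}+\V{\beta}\|_1 = \|\V{\alpha}\|_1 + \|\V{\beta}\|_1$ of the $1$-norm $\|\cdot\|_1$ appearing in the definition of $\sP_k$; note that this is where the doubled order $2k$ is exactly what is needed, no more and no less.
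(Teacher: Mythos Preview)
Your proof is correct and follows the same approach as the paper's own one-line argument, which simply observes that the product of two continuous piecewise-polynomial functions is continuous and that the polynomial order on each element doubles. You have merely spelled out the degree bookkeeping via the additivity of the $\ell_1$-norm on multi-indices, which the paper leaves implicit.
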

\begin{proof}
 The product $uv$ of two FEM functions is a continuous function and the polynomial order on each element is doubled.
 \end{proof}
\begin{theorem}[DoGIP for weighted projection]
 \label{lem:wp_dogip}
 Let $\sV=\fC_{\VN,k}$ be an approximation space and $\sW=\fC_{\VN,2k}$ be a double-grid space.
 Then the standard linear system matrix \eqref{eq:linsys_gp} can be decomposed into
 \begin{align*}
 \MB{A}\MBu\cdot\MBv &= \MB{B}^*\MB{A}{\diag} \MB{B}\MBu\cdot\MBv
 \quad\text{with }\MB{A}\diag\in\sR^{W\times W} \text{ and } \MB{B}\in\sR^{W\times V}
 \end{align*}
 where $V=\dim\sV$, $W=\dim\sW$, and the components of matrices are
 \begin{align*}
 \M{A}\diag_{ij} &= \delta_{ij}\int_\Omega m(\x)\phi^i_\sW(\x),
 &
 \M{B}_{ij} &= \phi^j_\sV(\x_\sW^i).
 \end{align*}
\end{theorem}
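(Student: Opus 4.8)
The plan is to show that both sides of the claimed identity evaluate to the single number $a(u,v)$, for the functions $u=\sum_{j=1}^V\Mu_j\phi^j_\sV$ and $v=\sum_{i=1}^V\Mv_i\phi^i_\sV$ associated with arbitrary coefficient vectors $\MBu,\MBv\in\sR^V$. The left-hand side is immediate: expanding $\MB{A}\MBu\cdot\MBv=\sum_{i,j}\M{A}_{ij}\Mu_j\Mv_i$ and inserting the definition of $\M{A}_{ij}$ from \eqref{eq:linsys_gp}, bilinearity of the integral gives $\int_\Omega m(\x)u(\x)v(\x)\D{\x}=a(u,v)$. Hence it suffices to verify that the DoGIP expression $\MB{B}^*\MB{A}\diag\MB{B}\MBu\cdot\MBv$ returns the same value.

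First I would record the meaning of the interpolation-projection matrix $\MB{B}$. Since $\M{B}_{ij}=\phi^j_\sV(\x_\sW^i)$, the $i$-th component of $\MB{B}\MBu$ is $\sum_j\phi^j_\sV(\x_\sW^i)\Mu_j=u(\x_\sW^i)$; that is, $\MB{B}\MBu=\Di_\sW[u]$ is the vector of nodal values of $u$ sampled at the nodes of the double-grid space, and likewise $\MB{B}\MBv=\Di_\sW[v]$. This reading is legitimate because the Double-grid space lemma guarantees $u,v\in\sW$, so the sampling at the $\sW$-nodes is well defined.

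The key step is the exact reproduction of the product on the double grid. By the Double-grid space lemma the product $uv$ lies in $\sW=\fC_{\VN,2k}$, and because the Lagrange basis $\{\phi^i_\sW\}$ enjoys the Dirac delta property $\phi^j_\sW(\x_\sW^i)=\delta_{ij}$, the nodal interpolant reproduces every element of $\sW$ exactly. Applied to $uv$ this yields the pointwise identity
\begin{align*}
u(\x)v(\x)=\sum_{i=1}^W u(\x_\sW^i)v(\x_\sW^i)\,\phi^i_\sW(\x),
\end{align*}
which is the crux of the argument. It then remains to substitute this expansion into $a(u,v)=\int_\Omega m(\x)u(\x)v(\x)\D{\x}$, interchange the finite sum with the integral, and recognise the diagonal weights,
\begin{align*}
a(u,v)=\sum_{i=1}^W u(\x_\sW^i)v(\x_\sW^i)\int_\Omega m(\x)\phi^i_\sW(\x)\D{\x}=\sum_{i=1}^W\M{A}\diag_{ii}\,(\MB{B}\MBu)_i(\MB{B}\MBv)_i.
\end{align*}

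Because $\MB{A}\diag$ is diagonal, this last sum is precisely $(\MB{A}\diag\MB{B}\MBu)\cdot(\MB{B}\MBv)=\MB{B}^*\MB{A}\diag\MB{B}\MBu\cdot\MBv$ by the definition of the adjoint, which closes the identity. I expect the only genuine obstacle to be the justification of the exact interpolation of $uv$; everything after it is bookkeeping. That step must explicitly invoke the doubling of the polynomial degree from $k$ to $2k$, since on the single grid $\sV$ the product $uv$ would leave the space and the identity would hold only up to a quadrature (consistency) error rather than exactly.
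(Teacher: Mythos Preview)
Your proof is correct and follows essentially the same approach as the paper's own argument: both rely on the fact that $uv\in\sW=\fC_{\VN,2k}$, expand $uv$ exactly in the Lagrange basis of $\sW$ via nodal sampling, substitute into the bilinear form to reveal the diagonal weights, and then identify the sampling operation with the interpolation-projection matrix $\MB{B}$. Your presentation is slightly more explicit about why the nodal interpolant reproduces $uv$ exactly (invoking the Dirac delta property of the Lagrange basis), but the structure and key ideas coincide with the paper's proof.
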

\begin{remark}
 In the case of the FEM space $\sV=\fC_{\VN,k}$ composed of the continuous functions, the double-grid space is of the same type but with the doubled polynomial order $\sW=\fC_{\VN,2k}$.
\end{remark}
\begin{proof}
 Since both trial $u$ and test function $v$ belong to the approximation space $\sV$, its product $uv$ belongs to $\sW$ and can be represented with respect to the Lagrangian basis of $\sW$ as
 \begin{align*}
 w(\x) = u(\x)v(\x) = \sum_{i=1}^W u(\x_\sW^i) v(\x_\sW^i)\phi^i_\sW(\x).
 \end{align*}
 The substitution of this formula into the bilinear form \eqref{eq:bilf_gp} reveals an effective evaluation with respect to the basis of $\sW$
 \begin{align*}
 \int_\Omega m(\x) u(\x) v(\x) \D{\x} &= \sum_{j=1}^W u(\x_\sW^j) v(\x_\sW^j) \int_\Omega m(\x)\phi^j_\sW(\x)\D{\x}
 \\
 &= \sum_{i,j=1}^W \underbrace{\delta_{ij}\int_\Omega m(\x)\phi^j_\sW(\x)\D{\x}}_{\MB{A}\diag_{ij}} \Mu^j_\sW \Mv_\sW^i = \scal{\M{A}\diag\MBu_\sW}{\MBv_\sW}_{\sR^M}
 \end{align*}
 where $\MB{A}\diag\in\sR^{W\times W}$ is a diagonal matrix and vectors $\MBu_\sW=\bigl(u(\x_\sW^j)\bigr)_{j=1}^W$ and $\MBv_\sW=\bigl(v(\x_\sW^j)\bigr)_{j=1}^W$ from $\sR^W$ store the coefficients of functions $u$ and $v$ at DOFs of $\sW$.
 
 Here one cannot derive directly a linear system because the test vectors $\MBv_\sW$ do not span the whole space $\sR^W$. Therefore we express the coefficients $\MBu_\sW$ and $\MBv_\sW$ in terms of DOFs of the original space, i.e.\ $\Mu_k=u(\x_\sV^k)$ and $\Mv_k=v(\x_\sV^k)$ for $k\in\{1,\dotsc,V\}$.
 
 The formula is based on the evaluation of the ansatz $u(\x) = 
 \sum_{j=1}^{V} u(\x_\sV^j) \phi^j_\sV(\x)$ at the DOFs of the double-grid space $\sW$, which reveals the interpolation-projection matrix $\MB{B}\in\sR^{W\times V}$ as
 \begin{align*}
 \Mu_\sW^i = u(\x_\sW^i) = 
 \sum_{j=1}^{V} u(\x_\sV^j) \phi^j_\sV(\x_\sW^i)
 =
 \sum_{j=1}^{V} \M{B}_{ij} \Mu_j
 \quad
 \text{for }i=1,\dotsc,W.
 \end{align*}
 The final formula in Theorem~\ref{lem:wp_dogip} is obtained by a substitution of the interpolation-projection matrix $\MB{B}$.
 \end{proof}
A special version of the DoGIP is obtained when the decomposition is provided on a level of elements.
\begin{theorem}[Element-wise DoGIP for weighted projection]
 \label{lem:wp_dogip_el}
 Let $\sV=\fC_{\VN,k}$ be an approximation space and $\sW=\fC_{\VN,2k}$ be a double-grid space.
 Then the quadratic form corresponding to the system matrix \eqref{eq:linsys_se} can be decomposed as
 \begin{align}
 \label{eq:wp_dogip_mul}
 \MB{A}\MBu\cdot\MBv &= \sum_{T\in \mesh} \hMBB^*\MB{A}_T\diag \hMBB \MBu_T\cdot\MBv_T = \sum_{T\in\mesh}\sum_{i,j=1}^{W_T} \sum_{k,l=1}^{V_T} \MA_{T,ij} \hMB_{jl}\Mu_{T,l}\hMB_{ik}\Mv_{T,k},
 \end{align}
 where $\MBu_T = \bigl(\Mu_{l_T(l)}\bigr)_{l=1}^{V_T}$, $\MBv_T = \bigl(\Mu_{l_T(k)}\bigr)_{k=1}^{V_T}$ stores the local DOFs on the element $T$.
 The arrays $\MBA_T\in\sR^{W_T\times W_T} \text{ and } \hMBB\in\sR^{W_T\times V_T}$ have the following components
 \begin{align*}
 \M{A}_{T,ij}\diag &= \del_{ij} \int_{T} m(\x)\phi^{l_T(j)}_\sW(\x)\D{\x},
 &
 \hMB_{jl} &= \hphi^l_\sV(\hx_\sW^j).
 \end{align*}
\end{theorem}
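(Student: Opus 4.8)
The plan is to mirror the argument of Theorem~\ref{lem:wp_dogip}, but to localise it by first splitting the domain integral elementwise. Since the bilinear form \eqref{eq:bilf_gp} is additive over the mesh, I would write $\MB{A}\MBu\cdot\MBv = \sum_{T\in\mesh}\int_T m(\x)u(\x)v(\x)\D{\x}$, so that it suffices to treat a single element $T$ and then sum. On $T$, both $u|_T$ and $v|_T$ lie in $\sP_k$, hence their product $uv|_T$ lies in $\sP_{2k}$ and is therefore \emph{exactly} representable in the local Lagrange basis of the double-grid space $\sW=\fC_{\VN,2k}$. Expanding $uv|_T = \sum_{j=1}^{W_T} u(\x^{l_T(j)}_\sW)v(\x^{l_T(j)}_\sW)\phi^{l_T(j)}_\sW$ and inserting it under the integral collapses the weight integrals onto the diagonal entries $\M{A}_{T,jj}\diag = \int_T m(\x)\phi^{l_T(j)}_\sW(\x)\D{\x}$, exactly as in the global proof.

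The decisive step is to express the local double-grid nodal values of $u$ and $v$ through a \emph{single} reference interpolation matrix that is the same for every element. Here I would use the affine reference-element structure: the local $\sW$-node with index $j$ satisfies $\x^{l_T(j)}_\sW = F_T(\hx^j_\sW)$, and each global basis function pulls back as $\phi^{l_T(l)}_\sV(\x) = \hphi^l_\sV(F_T^{-1}(\x))$ on $T$. Evaluating the ansatz $u|_T = \sum_{l=1}^{V_T}\Mu_{T,l}\phi^{l_T(l)}_\sV$ at $\x^{l_T(j)}_\sW$ then gives $u(\x^{l_T(j)}_\sW) = \sum_{l=1}^{V_T}\hphi^l_\sV(\hx^j_\sW)\Mu_{T,l} = \sum_{l=1}^{V_T}\hMB_{jl}\Mu_{T,l}$, with the analogous identity for $v$. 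The composition $F_T^{-1}\circ F_T$ cancels, so the matrix $\hMB_{jl} = \hphi^l_\sV(\hx^j_\sW)$ depends only on the reference element and is independent of $T$.

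Substituting these two expansions into the elementwise integral and using the diagonal form of $\MB{A}_T\diag$ to reintroduce a double index $i,j$ yields $\int_T m u v\D{\x} = \sum_{i,j=1}^{W_T}\sum_{k,l=1}^{V_T}\MA_{T,ij}\hMB_{jl}\Mu_{T,l}\hMB_{ik}\Mv_{T,k}$, which is precisely $\hMBB^*\MB{A}_T\diag\hMBB\MBu_T\cdot\MBv_T$; summing over $T\in\mesh$ gives \eqref{eq:wp_dogip_mul}. I expect the only genuine subtlety to be the bookkeeping that establishes element-independence of $\hMBB$ — verifying that the local $\sW$-nodes and the pulled-back $\sV$-shape functions refer to the same reference coordinates $\hx^j_\sW$ — since the exact polynomial representation and the additivity of the integral are immediate. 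Unlike Theorem~\ref{lem:wp_dogip}, there is no concern about the test vectors failing to span, because the identity is established at the level of quadratic forms on each element rather than as a linear system.
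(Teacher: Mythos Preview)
Your proposal is correct and follows essentially the same route as the paper: split the bilinear form into element integrals, expand $uv|_T$ in the local double-grid Lagrange basis to obtain the diagonal weights $\M{A}_{T,ij}\diag$, then pull back the nodal evaluations $u(\x^{l_T(j)}_\sW)$ to the reference element to exhibit the element-independent interpolation matrix $\hMBB$, and finally reparametrise to local indices. Your write-up is in fact somewhat more explicit than the paper's about the $F_T^{-1}\circ F_T$ cancellation that makes $\hMBB$ element-independent, but the underlying argument is the same.
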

\begin{proof}The proof is analogical to the one of the previous theorem. The main idea is to split the integral over domain $\Omega$ to integrals over elements, i.e.\ 
 \begin{align}
 \label{eq:aux_wp_elem}
 \int_\Omega m(\x) u(\x) v(\x) \D{\x} = \sum_{T\in\mesh}\sum_{i,j=1}^W \del_{ij}\int_T m(\x)\phi^j_\sW(\x)\D{\x} \cdot u(\x_\sW^j) v(\x_\sW^i),
 \end{align}
 where the integrals are nonzero only when the basis functions $\phi^j_\sW$ have support on $T$.
 Moreover, the interpolation
 \begin{align*}
 u(\x_\sW^j) = 
 \sum_{l=1}^{V} u(\x^l_\sV) \phi^l_\sV(\x_\sW^j) =  \sum_{l=1}^{V} \Mu_l \hphi^l_\sV(F^{-1}_T(\x_\sW^j))
 =
 \sum_{l=1}^{V} \Mu_l \hphi^l_\sV(\hx_\sW^j),
 \end{align*}
 which is expressed with the reference basis, has nonzero coefficients only if $\x_\sW^i$ is contained in the support of $\phi^j_\sV$. The substitution of the interpolation into \eqref{eq:aux_wp_elem} and reparametrisation of indices $i,j,k,l$ to local indices leads to the formulas stated in the theorem.
 \end{proof}
\subsection{Scalar elliptic equation}
\label{sec:scalar-elliptic-equation}
Here, the DoGIP is described for a scalar elliptic equation modelling, e.g.\ diffusion or stationary heat transfer.

\begin{definition}[Scalar elliptic problem]
 Let $\sV$ be a finite dimensional approximation space, $f\in L^2(\Omega)$ be a function representing e.g. a source term, and $\T{M}:\Omega\rightarrow\sR^{d\times d}$ be an integrable, uniformly positive definite, and bounded matrix function, i.e.\
 \begin{align*}
 \exists c,C\in\sR \quad\text{such that}\quad
 0< c\leq\frac{\T{M}(\x) \y\cdot\y}{\|\y\|} \leq C
 \quad \forall\x\in\Omega\text{ and }\forall \y\in\sR^d.
 \end{align*}
 
 Then we define bilinear form $a:\sV\times\sV\rightarrow\sR$ and linear functional $F:\sV\rightarrow\sR$ as
 \begin{align}
 \label{eq:bilf_se}
 a(u,v) &= \int_\Omega \T{M}(\x) \nabla u(\x) \cdot \nabla v(\x) \D{\x},
 &
 F(v) &= \int_\Omega f(\x) v(\x) \D{\x}.
 \end{align}
 The Galerkin approximation of the scalar elliptic problem stands for:
 find trial function $u\in\sV$ such that
 \begin{align*}
 a(u,v) = F(v)\quad\forall v\in\sV.
 \end{align*}
\end{definition}

Analogically to the case in the previous section, the standard approach is based on the expression of the approximate solution $u=\sum_{i=1}^V \Mu_i \phi^i_\sV$ with $V=\dim\sV$ as a linear combination with respect to the basis of the space $\sV$.
The vector of coefficients $\MBu=(\Mu_i)_{i=1}^V$ is determined from the following square linear system
\begin{subequations}
 \label{eq:linsys_se}
 \begin{align*}
 \MB{A}\MBu &= \MB{b}\in\sR^V,
 &
 \text{where }
 \M{A}_{ij} &= \int_\Omega \T{M}(\x) \nabla\phi^j_\sV(\x)\cdot{\nabla}\phi^i_\sV(\x)\D{\x},
 \\
 &&\M{b}_i &= \int_\Omega f(\x) \phi^i_\sV(\x)\D{\x}.
 \end{align*}
\end{subequations}
The DoGIP approach incorporates the double-grid space described in the following lemma.
\begin{lemma}[Double-grid space]
 Let the functions $u,v$ be from the space $\sV=\fC_{\VN,k}$ and let $\sW=\fD_{\VN,2k-2}$ be the corresponding double-grid space.
 Then
 \begin{align*}
 \frac{\partial u}{\partial x_p}&\in\fD_{\VN,k-1}\subset\sW,
 &
 \frac{\partial u}{\partial x_p}\frac{\partial v}{\partial x_q}&\in\sW
 \quad\text{for }p,q\in\{1,\dotsc,d\}.
 \end{align*}
\end{lemma}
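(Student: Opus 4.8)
The plan is to work element by element and to track two things: the polynomial degree of the relevant functions on each element and whether global continuity survives. Both claims then reduce to the elementary monotonicity of the spaces $\sP_j$ in the degree $j$, combined with the degree arithmetic of differentiation and multiplication. No integration or functional-analytic machinery is needed; the content is entirely about degrees and continuity classes.

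First I would establish the degree drop under differentiation. Since $u\in\sV=\fC_{\VN,k}$, its restriction $u|_T$ lies in $\sP_k$ on every element $T\in\mesh$. Differentiating a polynomial of total degree at most $k$ with respect to one coordinate yields a polynomial of total degree at most $k-1$, so $(\partial u/\partial x_p)|_T\in\sP_{k-1}$ for each $T$. The conceptual point worth stating explicitly is that global continuity is generally lost: although $u$ is continuous across facets, only its tangential derivatives need match there while the normal derivative may jump. Hence $\partial u/\partial x_p$ is merely a piecewise polynomial with no continuity constraint, which is exactly the defining property of the discontinuous space, giving $\partial u/\partial x_p\in\fD_{\VN,k-1}$. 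The inclusion $\fD_{\VN,k-1}\subset\fD_{\VN,2k-2}=\sW$ then follows because $k-1\leq 2k-2$ whenever $k\geq 1$, so $\sP_{k-1}\subset\sP_{2k-2}$ on each element.

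For the product, I would again argue locally. On each $T$ both factors $(\partial u/\partial x_p)|_T$ and $(\partial v/\partial x_q)|_T$ belong to $\sP_{k-1}$, and the product of two polynomials of total degree at most $k-1$ has total degree at most $2(k-1)=2k-2$. Thus the product restricts to an element of $\sP_{2k-2}$ on every $T$. Since neither factor carries a continuity requirement, the product inherits none either, so $(\partial u/\partial x_p)(\partial v/\partial x_q)\in\fD_{\VN,2k-2}=\sW$, as claimed.

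The step I expect to require the most care is not a computation but the justification for using the discontinuous space $\fD$ rather than the continuous $\fC$; this is precisely what distinguishes the present lemma from the weighted-projection case, where the product $uv$ of two continuous functions remains continuous and a continuous double-grid space suffices. I would also note the mild degree restriction $k\geq 1$, which is needed both for the inclusion above and for $\fD_{\VN,2k-2}$ to be well defined.
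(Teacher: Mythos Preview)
Your proof is correct and follows the same approach as the paper: the paper's own proof is a single sentence observing that differentiation reduces the polynomial order by one and destroys continuity across facets. Your version simply fills in the details (the degree arithmetic for the product, the inclusion $\sP_{k-1}\subset\sP_{2k-2}$, and the explicit reason continuity fails via the normal derivative), but the underlying idea is identical.
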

\begin{proof}
 The derivative of a function reduces the polynomial order by one while the continuity of the functions over facets (edges in 2D or faces in 3D) is lost.
 \end{proof}
Analogically to the weighted projection problem, there are two variants of the DoGIP approach: the global and the element-wise one. Since the double-grid space is discontinuous, the global variant has benefit only in the special cases of regular grid and isotropic materials. Both variants are presented in the following two theorems.
\begin{theorem}[DoGIP for elliptic problem]
 \label{lem:ep_dogip}
 Let $\sV=\fC_{\VN,k}$ be an approximation space and $\sW=\fD_{\VN,2k-2}$ be a double-grid space.
 Then the standard linear system matrix \eqref{eq:linsys_se} can be decomposed into
 \begin{align*}
 \MB{A}\MBu\cdot\MBv = \MB{B}^*\MB{A}{\diag} \MB{B}\MBu\cdot\MBv = \sum_{k,l=1}^V\sum_{r,s=1}^d \sum_{i,j=1}^W \M{A}\diag_{rsij}\M{B}_{sjl}  \Mu_l \M{B}_{rik}\Mv_k
 \end{align*}
 where $V=\dim\sV$, $W=\dim\sW$, and the matrices $\MB{A}\diag\in\sR^{d\times d\times W\times W} \text{ and } \MB{B}\in\sR^{d\times W\times V}$ have the following components
 \begin{align*}
 \M{A}\diag_{rsij} &= \delta_{ij}\int_\Omega M_{rs}(\x)\phi^j_\sW(\x)\D{\x}
 \quad \text{for }r,s\in\{1,\dotsc,d\}, i,j\in\{1,\dotsc,W\},
 \\
 \M{B}_{rik} &= \frac{\partial \phi^k_\sV(\x_\sW^i)}{\partial x_r}
 \quad \text{for }r\in\{1,\dotsc,d\}, i\in\{1,\dotsc,W\}, k\in\{1,\dotsc,V\}.
 \end{align*}
\end{theorem}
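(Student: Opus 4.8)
The plan is to follow the template of the weighted projection in Theorem~\ref{lem:wp_dogip}, with the single product $uv$ replaced by the directional products of the gradients. The starting point is the identity $\MB{A}\MBu\cdot\MBv=a(u,v)$, which follows by bilinearity from the definition of $\M{A}_{ij}$ in \eqref{eq:linsys_se}. First I would expand the material action inside the bilinear form \eqref{eq:bilf_se} component-wise as $\T{M}(\x)\nabla u(\x)\cdot\nabla v(\x)=\sum_{r,s=1}^d M_{rs}(\x)\frac{\partial u}{\partial x_s}(\x)\frac{\partial v}{\partial x_r}(\x)$, so that $a(u,v)$ becomes a finite sum over the pairs $(r,s)$ of integrals, each involving a single scalar product of two directional derivatives.

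The key structural step uses the Double-grid lemma for the elliptic problem: for every $s$ the derivative $\frac{\partial u}{\partial x_s}$ lies in $\fD_{\VN,k-1}$, hence for each pair $(r,s)$ the product $\frac{\partial u}{\partial x_s}\frac{\partial v}{\partial x_r}$ is element-wise a polynomial of order $2k-2$ and therefore belongs to $\sW=\fD_{\VN,2k-2}$. Consequently this product is reproduced exactly by the Lagrangian nodal interpolation on $\sW$, i.e.\ $\frac{\partial u}{\partial x_s}(\x)\frac{\partial v}{\partial x_r}(\x)=\sum_{i=1}^W \frac{\partial u}{\partial x_s}(\x_\sW^i)\frac{\partial v}{\partial x_r}(\x_\sW^i)\phi^i_\sW(\x)$, where the nodal values are understood as the one-sided element-wise limits introduced earlier for discontinuous spaces. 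Substituting this representation, interchanging the finite sum with the integral, and collecting the material weights then identifies the diagonal array $\M{A}\diag_{rsij}=\delta_{ij}\int_\Omega M_{rs}(\x)\phi^j_\sW(\x)\D{\x}$.

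It then remains to express the nodal derivative values at the double-grid nodes through the degrees of freedom of the original space. Inserting the ansatz $u=\sum_{l=1}^V\Mu_l\phi^l_\sV$ and differentiating, I would obtain $\frac{\partial u}{\partial x_s}(\x_\sW^j)=\sum_{l=1}^V\frac{\partial\phi^l_\sV(\x_\sW^j)}{\partial x_s}\Mu_l=\sum_{l=1}^V\M{B}_{sjl}\Mu_l$, and the analogous identity for $v$, which reveals the interpolation-projection array $\M{B}_{rik}=\frac{\partial\phi^k_\sV(\x_\sW^i)}{\partial x_r}$. Substituting both into the expression assembled in the previous step and regrouping the indices yields the claimed contraction $\sum_{k,l=1}^V\sum_{r,s=1}^d\sum_{i,j=1}^W\M{A}\diag_{rsij}\M{B}_{sjl}\Mu_l\M{B}_{rik}\Mv_k$; reading this as a triple (tensor) matrix product establishes the equivalent compact form $\MB{B}^*\MB{A}\diag\MB{B}\MBu\cdot\MBv$, since the two displayed expressions are only two notations for the same contraction.

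I expect the main obstacle to be the bookkeeping forced by the two new directional indices $r,s$ together with the discontinuity of $\sW$. Unlike the projection case, the integrand is a sum of $d^2$ products, and the gradients lose inter-element continuity, so one must justify that the nodal interpolation on the discontinuous space $\sW$ reproduces each product exactly despite the jumps across facets. This is precisely where the element-wise interpretation of the nodal values, and hence the discontinuous choice $\sW=\fD_{\VN,2k-2}$ rather than a continuous space, becomes essential.
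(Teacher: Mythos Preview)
Your proposal is correct and follows essentially the same route as the paper's proof: represent the gradient product on the double-grid space $\sW=\fD_{\VN,2k-2}$ via Lagrangian nodal interpolation, substitute into the bilinear form to isolate the block-diagonal weights $\M{A}\diag_{rsij}$, and then recover the interpolation-projection array $\M{B}$ by differentiating the ansatz at the double-grid nodes. The only difference is presentational: the paper packages the $d^2$ directional products into the single tensor $\nabla u\otimes\nabla v\in\sW^{d\times d}$ and uses the identity $\T{M}:\!(\nabla u\otimes\nabla v)=\T{M}\nabla u\cdot\nabla v$, whereas you expand the same object component-wise in $(r,s)$ from the outset; the resulting formulas and the logical flow are identical.
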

\begin{proof}
 Now we present a method based on the double-grid quadrature and interpolation operator. 
 Albeit both the trial $u$ and the test function $v$ belong to the approximation space $\sV$, we express them on the double-grid space $\sW^{d\times d}$ as
 \begin{align*}
 w(\x) = \nabla u(\x)\otimes \nabla v(\x) = \sum_{j=1}^W \nabla u(\x_\sW^j)\otimes\nabla v(\x_\sW^j)\phi^j_\sW(\x). 
 \end{align*}
 The substitution of this formula in \eqref{eq:bilf_se} already reveals an effective evaluation of the bilinear form
 \begin{align*}
 \int_\Omega \T{M}(\x) \nabla u(\x)\cdot \nabla v(\x) \D{\x} &=
 \int_\Omega \T{M}(\x) : [\nabla u(\x)\otimes \nabla v(\x) ] \D{\x}
 \\
 & = \sum_{j=1}^W \int_\Omega \T{M}(\x)\phi^j_\sW(\x)\D{\x} : \nabla u(\x_\sW^j) \otimes \nabla v(\x_\sW^j)
 \\
 &= \scal{\MB{A}\diag\MBu_\sW}{\MBv_\sW}_{\sR^{d\times W}},
 \end{align*}
 where $\MBu_\sW=(\nabla u(\x_\sW^j))_{j=1}^W$, $\MBv_\sW=(\nabla v(\x_\sW^j))_{j=1}^W\in\sR^{d\times W}$ stores the coefficients on the double-grid space, the integral defines the coefficient of the block-diagonal matrix $\MBA\diag$, and the matrix vector multiplication  is understood as
 \begin{align*}
 (\MB{A}\diag\MBu_\sW)_{ri} = \sum_{s=1}^d\sum_{j=1}^W\M{A}_{rsij}\diag\Mu_{\sW,sj}
 \quad\text{for }r=\{1,\dotsc,d\}\text{ and }i\in\{1,\dotsc,W\}.
 \end{align*}
 Here, we cannot derive directly a linear system because the test vectors $\MBv$ do not span the whole space $\sR^{d\times W}$.
 Therefore, we need an interpolation-projection operator between $\sV$ and $\sW^d$, or rather between their corresponding spaces of coefficients $\Di_\sV[\sV]$ and $\Di_\sW[\sW]^d$.
 The derivation of the matrix is based on the evaluation of the gradient of a function $u(\x)=\sum_{k=1}^V u(\x^k_\sV) \phi^k_\sV(\x)$ at the degrees of freedom of the double grid space, i.e.\ for $\x=\x_\sW^j$.
 It can be represented with an interpolation-projection operator $\MB{B}\in\sR^{d\times W\times V}$ as
 \begin{align*}
 \frac{\partial u(\x_\sW^j)}{\partial x_s}  &= 
 \sum_{l=1}^V u(\x_\sV^l) \underbrace{\frac{\partial \phi_\sV^l(\x_\sW^j)}{\partial x_s}}_{\M{B}_{sjl}} 
 \quad
 \text{for } s\in\{1,\dotsc,d\} \text{ and } j\in\{1,\dotsc,W\}.
 \end{align*}
 
 To summarise, the bilinear form can be expressed using different vectors of coefficients
 \begin{align*}
 a(u,v) &= \scal{\MB{A}\diag \MBu_\sW}{\MBv_\sW}_{\sR^{d\times W}} = \scal{\MB{A}\diag \MB{B} \MBu_\sV}{\MB{B}\MBv_\sV}_{\sR^{d\times W}} 
= \scal{\MB{B}^*\MB{A}\diag \MB{B} \MBu_\sV}{\MBv_\sV}_{\sR^{V}},
 \end{align*}
 where the $\MB{B}^*$ is the adjoint operator of $\MB{B}$.
 Since the last term holds for arbitrary vectors $\MBu_\sV$ and $\MBv_\sV$, the required decomposition is established.
 \end{proof}

The special case of the theorem arises for isotropic material coefficients:
\begin{corollary}
 Assume that $\T{M}(\x) = m(\x)\T{I}$ is proportional to the identity matrix $\T{I}$ with some uniformly positive and bounded function $m:\Omega\rightarrow\sR$.
 Then, the coefficients of the block-diagonal matrix from previous theorem simplify to
 \begin{align*}
 \M{A}\diag_{rsij} &= \delta_{rs}\delta_{ij}\int_\Omega m(\x)\phi^j_\sW(\x)\D{\x}.
 \end{align*}
 In particular for $d=2$, the matrix $\MB{A}\diag$ can be expressed as block-diagonal operator
 \begin{align*}
 \MB{A}\diag &= 
 \begin{pmatrix}
 \MB{a}\diag & \MB{0} \\
 \MB{0} & \MB{a}\diag
 \end{pmatrix},
 &
 \MB{a}\diag &= \biggl(\delta_{ij}\int_{\Omega} m(\x)\phi^i_\sW(\x)\D{\x}\biggr)_{i,j=1}^W.
 \end{align*}
\end{corollary}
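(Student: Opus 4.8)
The plan is to obtain the claim by a direct substitution of the isotropic ansatz $\T{M}(\x)=m(\x)\T{I}$ into the coefficient formula for the block-diagonal operator established in Theorem~\ref{lem:ep_dogip}. Recall that the theorem expresses the entries of $\MB{A}\diag$ as $\M{A}\diag_{rsij}=\delta_{ij}\int_\Omega M_{rs}(\x)\phi^j_\sW(\x)\D{\x}$, which depends on the material field only through its scalar components $M_{rs}$. This linearity in $\T{M}$ is what makes the simplification immediate.

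First I would note that the assumption $\T{M}(\x)=m(\x)\T{I}$ is equivalent, componentwise, to $M_{rs}(\x)=m(\x)\delta_{rs}$ for all $\x\in\Omega$ and all $r,s\in\{1,\dotsc,d\}$. Substituting this into the theorem's formula gives $\M{A}\diag_{rsij}=\delta_{ij}\int_\Omega m(\x)\delta_{rs}\phi^j_\sW(\x)\D{\x}$. Since the Kronecker symbol $\delta_{rs}$ is constant with respect to the integration variable $\x$, it factors out of the integral, yielding the asserted identity $\M{A}\diag_{rsij}=\delta_{rs}\delta_{ij}\int_\Omega m(\x)\phi^j_\sW(\x)\D{\x}$.

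Next I would translate this four-index expression into the block form for $d=2$. The factor $\delta_{rs}$ shows that the only nonzero entries couple equal spatial indices $r=s$, so the operator decouples the two gradient components; for each fixed $r=s$ the remaining factor $\delta_{ij}\int_\Omega m(\x)\phi^i_\sW(\x)\D{\x}$ is precisely the entry of the scalar diagonal matrix $\MB{a}\diag$ defined in the statement. Arranging the composite index $(r,i)$ so that the spatial index $r$ labels the outer block, one reads off that $\MB{A}\diag$ is block-diagonal with both diagonal blocks equal to $\MB{a}\diag$ and the off-diagonal blocks vanishing, which is exactly the displayed matrix.

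I do not expect any genuine obstacle here: the argument is a one-line substitution followed by a bookkeeping step. The only point requiring minor care is the identification of the tensor $\M{A}\diag_{rsij}$ with a block matrix acting on $\sR^{d\times W}$, namely keeping track that the spatial indices $r,s$ index the blocks while the double-grid indices $i,j$ index the entries within each block.
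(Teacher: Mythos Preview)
Your proposal is correct and matches the intended argument: the paper states this corollary without an explicit proof, treating it as an immediate consequence of Theorem~\ref{lem:ep_dogip}, and your direct substitution $M_{rs}(\x)=m(\x)\delta_{rs}$ into the formula $\M{A}\diag_{rsij}=\delta_{ij}\int_\Omega M_{rs}(\x)\phi^j_\sW(\x)\D{\x}$ followed by the block-matrix bookkeeping is precisely that consequence spelled out.
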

\begin{theorem}[Element-wise DoGIP for the elliptic problem]
 \label{lem:ep_dogip_el}
 Let $\sV=\fC_{\VN,k}$ be an approximation space and $\sW=\fD_{\VN,k}$ be a double-grid space.
 Then the quadratic form corresponding to the linear system \eqref{eq:linsys_se} can be decomposed as
 \begin{align*}
 \MB{A}\MBu\cdot\MBv &= \sum_{T\in \mesh} \hMBB^*\MB{A}_T\diag \hMBB \MBu_T\cdot\MBv_T,
 \end{align*}
 where $\MBu_{T,l} = \Mu_{l_T(l)}, \MBv_{T,k} = \Mv_{l_T(k)}$ stores the local DOFs on the element $T$. The quadratic forms on the element level are evaluated as
 \begin{align}
 \label{eq:ep_dogip_mul}
 \hMBB^*\MB{A}_T\diag \hMBB \MBu_T\cdot\MBv_T &= \sum_{r,s=1}^d \sum_{i,j=1}^{W_T} \sum_{k,l=1}^{V_T} \MA_{T,rsij} \hMB_{sjl}\Mu_{T,l}\hMB_{rik}\Mv_{T,k},
 \end{align}
 where the arrays $\MBA\diag_T\in\sR^{d\times d\times W_T\times W_T} \text{ and } \hMBB\in\sR^{d\times W_T\times V_T}$ have the following components
 \begin{align*}
 \M{A}\diag_{T,rsij} &= \del_{ij}\sum_{p,q=1}^d R^{-1}_{T,rp} R^{-1}_{T,sq} \int_{T} M_{pq}(\x)\phi^i_\sW(\x)\D{\x},
 &
 \hMB_{rik} &= \frac{\partial \hphi^k_\sV(\hx_\sW^i)}{\partial x_r}.
 \end{align*}
\end{theorem}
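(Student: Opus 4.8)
The plan is to localise the global decomposition of Theorem~\ref{lem:ep_dogip} element by element and to push all the geometry onto the reference element through the affine map $F_T$, so that the interpolation array $\hMBB$ becomes independent of $T$ while all element-specific data (the inverse-Jacobian factors $R_T^{-1}$ and the material coefficients) are collected in the block-diagonal array $\MB{A}_T\diag$. First I would split the bilinear form \eqref{eq:bilf_se} over the mesh,
\begin{align*}
a(u,v) = \sum_{T\in\mesh}\int_T \T{M}(\x):[\nabla u(\x)\otimes\nabla v(\x)]\D{\x},
\end{align*}
so that it suffices to treat a single element integral and afterwards reassemble.

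On a fixed $T$ I would pass to the reference element. Writing $\hat u = u\circ F_T$ and using the chain rule \eqref{eq:bas_deriv}, each physical derivative becomes $\partial_{x_a} v = \sum_{p}(R_T^{-1})_{pa}\,\partial_{\hat{x}_p}\hat v$, and likewise for $u$. Substituting this into the integrand and collecting the two inverse-Jacobian contractions rewrites $\T{M}:[\nabla u\otimes\nabla v]$ as
\begin{align*}
\sum_{r,s=1}^d\Bigl[\sum_{p,q=1}^d (R_T^{-1})_{rp}(R_T^{-1})_{sq}M_{pq}(\x)\Bigr]\,\partial_{\hat{x}_s}\hat u\,\partial_{\hat{x}_r}\hat v,
\end{align*}
which already exposes the contracted material tensor appearing in $\M{A}\diag_{T,rsij}$. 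The crucial step is then to represent the product $\partial_{\hat{x}_s}\hat u\,\partial_{\hat{x}_r}\hat v$ exactly in the Lagrange basis of the local double-grid space: by the Double-grid space lemma preceding this theorem this product, a polynomial of degree $2k-2$ on the reference element, lies in $\sW$ restricted to $T$, so
\begin{align*}
\partial_{\hat{x}_s}\hat u(\hx)\,\partial_{\hat{x}_r}\hat v(\hx) = \sum_{i=1}^{W_T}\partial_{\hat{x}_s}\hat u(\hx_\sW^i)\,\partial_{\hat{x}_r}\hat v(\hx_\sW^i)\,\hphi^i_\sW(\hx),
\end{align*}
and the interpolation is exact. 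Inserting this expansion and pulling $\hphi^i_\sW(F_T^{-1}(\x)) = \phi^i_\sW(\x)$ back to physical coordinates leaves the integral $\int_T M_{pq}(\x)\phi^i_\sW(\x)\D{\x}$ in physical variables, so no separate Jacobian determinant appears, producing exactly the $\del_{ij}$ block-diagonal coefficients $\M{A}\diag_{T,rsij}$.

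It then remains to read off the two arrays. The nodal reference gradients are linear in the local degrees of freedom, $\partial_{\hat{x}_r}\hat v(\hx_\sW^i) = \sum_k \hMB_{rik}\Mv_{T,k}$ with $\hMB_{rik} = \partial_{\hat{x}_r}\hphi^k_\sV(\hx_\sW^i)$, which is manifestly independent of $T$, and similarly for $u$; combining these with the contracted material integrals gives the element quadratic form \eqref{eq:ep_dogip_mul}. Finally I would reparametrise the global indices through the local-to-global map $l_T$ to obtain $\MBu_T,\MBv_T$ and sum over $T\in\mesh$; since the identity holds for arbitrary coefficient vectors it yields the claimed decomposition $\MB{A}\MBu\cdot\MBv = \sum_T \hMBB^*\MB{A}_T\diag\hMBB\MBu_T\cdot\MBv_T$.

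The main obstacle I expect is bookkeeping rather than analysis: keeping the two independent $R_T^{-1}$ contractions attached to the correct factor ($u$ versus $v$) and matching the index groups $(r,s,i,j)$ and $(k,l)$ against the convention of \eqref{eq:ep_dogip_mul}, where the symmetry between the $u$- and $v$-slots makes transposition errors easy. The one genuine point to justify carefully is the exactness of the local interpolation: it uses that $\sW$ is discontinuous, so that on each element the Lagrange basis reproduces the full polynomial space $\sP_{2k-2}$ containing the gradient product, and that the elements are decoupled by the absence of any inter-element continuity constraint, which is precisely what makes the element-wise form legitimate.
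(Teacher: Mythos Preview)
Your proposal is correct and follows essentially the same route as the paper: split the bilinear form over elements, use the exact Lagrange interpolation of the gradient product in the double-grid space, and pull the physical derivatives back to the reference element via the chain rule so that the $R_T^{-1}$ factors land in $\MB{A}_T\diag$ while $\hMBB$ is element-independent. The only cosmetic difference is the order of operations---the paper first interpolates the physical tensor $\nabla u\otimes\nabla v$ and then applies the chain rule, whereas you apply the chain rule first and interpolate the reference product $\partial_{\hat x_s}\hat u\,\partial_{\hat x_r}\hat v$; the two orderings are equivalent and lead to the same index bookkeeping.
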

\begin{proof}
 Now we present the method based on the double-grid quadrature and interpolation operators. 
 Albeit both the trial $u$ and the test function $v$ belong to the approximation space $\sV$, we express their tensor product $u\otimes v$ on the double-grid space $\sW^{d\times d}$ as
 \begin{align*}
 w(\x) = \nabla u(\x)\otimes \nabla v(\x) = \sum_{j=1}^W \nabla u(\x_\sW^j)\otimes\nabla v(\x_\sW^j)\phi^j_\sW(\x). 
 \end{align*}
 The substitution of this formula into the bilinear form reveals an effective evaluation
 \begin{align*}
 \int_\Omega
 \T{M}(\x) \nabla u(\x)\cdot \nabla v(\x) \D{\x}
 &=
 \sum_{T\in\mesh} \int_T \T{M}(\x) : [\nabla u(\x)\otimes \nabla v(\x) ] \D{\x}
 \\
 & = \sum_{T\in\mesh} \sum_{j=1}^W  \int_T \T{M}(\x)\phi^j_\sW(\x)\D{\x} : \nabla u(\x_\sW^j) \otimes \nabla v(\x_\sW^j)
 \\
 & = \sum_{T\in\mesh} \sum_{i,j=1}^W \sum_{p,q=1}^d \del_{ij}\int_{T} M_{pq}(\hx)\phi^j_\sW(\x)\D{\x} 
 \cdot \frac{\partial u(\x_\sW^j)}{\partial x_p} \frac{\partial v(\x_\sW^i)}{\partial x_q},
 \end{align*}
 which is written here as a weighted scalar product between arrays $\bigl(\nabla u(\x_\sW^j)\bigr)_{j=1}^W$ and $\bigl(\nabla v(\x_\sW^i)\bigr)_{i=1}^W$.
 
 Now, these components will be expressed in the terms of nodal values $\Mu_k = v(\x_\sV^k)$ of the space $\sV$.
 The formula is based on the interpolation of the gradient of a function $u(\x)=\sum_{k=1}^V \Mu_k \phi^k_\sV(\x)$ at the DOFs of the double grid space $\sW$ (i.e.\ at $\x=\x_\sW^j$ with basis $\phi_\sW^j$ supported at the element $T$)
 \begin{align*}
 \frac{\partial u(\x_\sW^j)}{\partial x_q}  &= 
 \sum_{l=1}^V \Mu_l \frac{\partial \phi_\sV^l(\x_\sW^j)}{\partial x_q}
 = 
 \sum_{l=1}^V \Mu_l \sum_{s=1}^d \frac{\partial \hphi_\sV^l(\hx_\sW^{j})}{\partial \hat{x}_s} (R_{T}^{-1})_{sq},
 \end{align*}
 where the derivatives of the basis functions are expressed with respect to the reference basis function, see \eqref{eq:bas_deriv}, and $\hx_\sW^{j} = F_T^{-1}(\x_\sW^{j})$.
 
 The substitution into the integral provides the DoGIP evaluation over the finite elements
 \begin{align*}
 &\int_\Omega
 \T{M}(\x) \nabla u(\x)\cdot \nabla v(\x) \D{\x}=
 \\
 &= \sum_{T\in\mesh} \sum_{i,j=1}^W \sum_{r,s=1}^d \left( \del_{ij}\sum_{p,q=1}^d R^{-1}_{T,rp} R^{-1}_{T,sq} \int_{T} M_{pq}(\x)\phi^j_\sW(\x)\D{\x} \right)
 \cdot \sum_{k,l=1}^V  \Mu_l \Mv_k  \frac{\partial \hphi_\sV^l(\hx_\sW^j)}{\partial x_s}  \frac{\partial \hphi_\sV^k(\hx_\sW^i)}{\partial x_r}.
 \end{align*}
 The interpolation coefficients $\hphi_\sV^k(\hx_\sW^i)$ as well as the integral over $T$ are nonzero only if the support of $\phi_\sV^k$ and $\phi^i_\sW$ is on the same element $T$.
 Therefore, we can reparametrise the sums over $i,j,k$, and $l$ to local indices in order to obtain the final formula stated in the theorem.
 \end{proof}

The actual matrix-vector multiplication can be provided by the following Algorithm~ \ref{alg:mat-vec_mult}.
\begin{algorithm}
 \caption{
  Matrix-vector multiplication for element-wise DoGIP}
 \label{alg:mat-vec_mult}
 \begin{algorithmic}[1]
  \Require
  {$\MBA_T\diag\in\sR^{d\times d\times W_T\times W_T}$ for all $T\in\mesh$, $\hMBB\in\sR^{d\times W_T\times V_T}$
  }
  \Function{multiplication}{$\MBA$, $\MBu$}
  \State{allocate $\MBv$ of size like $\MBu$}
  \For{$T$ in $\mesh$}
  \Comment{iteration over all elements}
  \State{get local DOFs $\MBu_T$ from $\MBu$}
  \State{calculate sequentially from right $\hMBB\MBA_T\diag\hMBB\MBu_T$}
  \Comment{Eq.~\eqref{eq:ep_dogip_mul}}
  \State{add $\hMBB\MBA_T\diag\hMBB\MBu_T$ to $\MBv$}
  \EndFor{}
  \State{\Return $\MBv$}
  \EndFunction
 \end{algorithmic}
\end{algorithm}

\section{Numerical results}\label{sec:numerical-results}
Here the memory and computational requirements are compared for the DoGIP and the standard discretisation approach.
The two variational problems presented in sections~\ref{sec:weighted-projection} and \ref{sec:scalar-elliptic-equation} were implemented in an open-source FEM software \href{https://fenicsproject.org/}{FEniCS} \cite{Fenics_v15};
Python scripts are freely available on \href{https://github.com/vondrejc/DoGIP}{https://github.com/vondrejc/DoGIP}.

The irregular meshes $\meshN$, with an example depicted in Figure~\ref{fig:mesh}, are described with the parameter $\VN=(N,N,N)$ corresponding to the number of elements in each spatial direction;
for simplicity the mesh will be denoted as $\mesh$.

\begin{figure}[htp]
 \centering
 \includegraphics[width=0.4\linewidth]{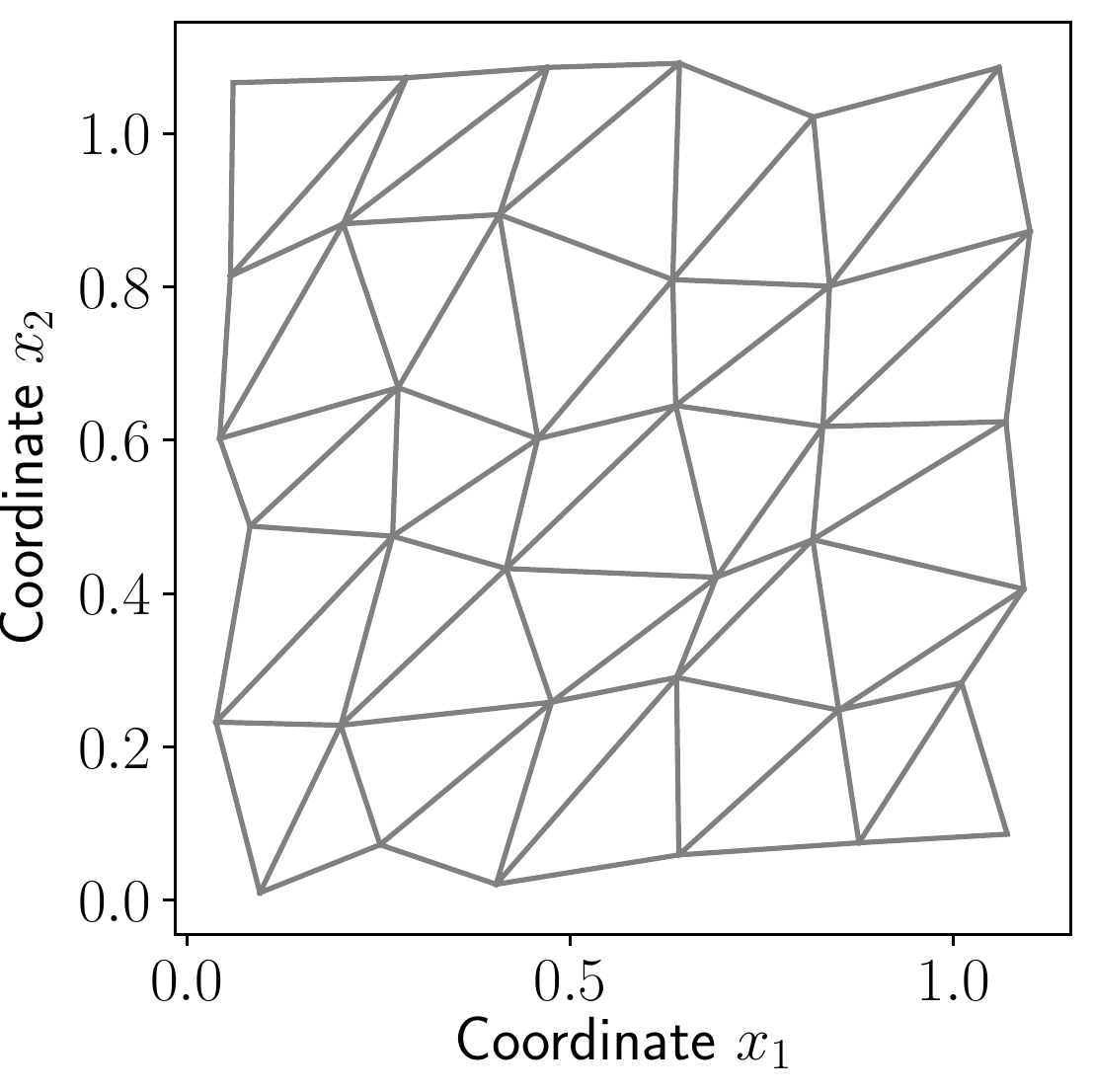}
 \caption{Finite element mesh $\mathcal{M}_\VN$ of size $\VN=(5,5)$ and characteristic mesh size $\frac{1}{5}$.}
 \label{fig:mesh}
\end{figure}

Now the memory and computational demands are discussed.
To store sparse matrices obtained from FEM formulations, the compressed sparse row (CSR) format is considered with memory requirements
\begin{align*}
\mem \MB{A} = 2\nnz \MB{A} + \nrows \MB{A},
\end{align*}
where $\nnz$ is a number of non-zero elements and $\nrows$ number of rows.
This format requires to store all the nonzero elements, all the corresponding column indices in each row, and the number of elements in each row.
It is also noted that the values below the threshold $10^{-14}$ are consistently considered as zeros and therefore do not affect the memory requirements.

The following ratio provides
\begin{align}
\label{eq:memory_eff}
\text{memory efficiency} = \frac{\nnz \MB{A}_T\diag\cdot\dim\mesh}{\mem \MB{A}}
\end{align}
between DoGIP and standard approach, where $\nnz\MB{A}_T\diag$ are non-zero elements of dense matrices $\MB{A}_T\diag$ for each element $T$, and $\dim\mesh$ corresponds to the number of elements ($2N^2$ in 2D and $6N^3$ in 3D).

The computational requirements are directed by the number of operations (real multiplications) for matrix-vector multiplication which is directed by the number of nonzero elements of the system matrix $\MB{A}$ or item-by-item multiplication with matrix $\hMBB^*\MB{A}_T\diag\hMBB$ for each element $T$ of the mesh $\mesh$. The following ratio provides
\begin{align}
\label{eq:comp_eff}
\text{computational eff.} = \frac{[2(\nnz\hMBB-\nnz_{\pm1}\hMBB)+\nnz\MB{A}_T\diag]\cdot\dim\mesh}{\nnz\MB{A}}
\end{align}
between DoGIP and standard FEM.
The multiplication in DoGIP can benefit from the regular structure with multiplication with $\hMBB$ that could be optimised for a fast evaluation.
Additional costs that are attributed to the multiplication with sparse matrices are not considered here.

\subsection{Weighted projection}
In the weighted projection, the basis space $\sV$ is considered to be the space of continuous piecewise-polynomials $\fC_{\VN,k}$.
Then the double-grid space can be considered again as a space of continuous piecewise-polynomials but with the doubled polynomial order, i.e.\
\begin{align*}
\sV &= \fC_{\VN,k},
&
\sW &= \fC_{\VN,2k}.
\end{align*}

\begin{table}[ht]
 \caption{Comparison of the DoGIP and standard approach for weighted projection in 2D with $\dim\sV=1,442,401$ for all cases in this table. Here $1/N$ is the characteristic mesh size, $k$ is a polynomial order, and memory and computational efficiency are defined in \eqref{eq:memory_eff} and \eqref{eq:comp_eff}.}
 \label{tab:WP_2D}
 \centering
\begin{tabular}{|rr|rr|rrr|rr|} 
 \hline
 \multicolumn{2}{|c|}{param.} & \multicolumn{2}{|c|}{standard FEM} & \multicolumn{3}{|c|}{DoGIP} &  \multicolumn{2}{|c|}{effectiveness} \\
 $N$ & $k$ & $\mem \MB{A}$ & $\mem \MB{A}_T$ & $\mem \MB{A}\diag$ & $\mem \MB{A}\diag_T$ & $\nnz \hMBB$ &  mem. & comp.  \\
 \hline
  1,200 & 1 & 21,616,803 & 9 & 17,280,000 & 6 & 9 & 0.80 & 5.14  \\ 
  600 & 2 & 34,581,447 & 36 & 10,800,000 & 15 & 39 & 0.31 & 3.52  \\ 
  400 & 3 & 50,426,139 & 100 & 8,960,000 & 28 & 115 & 0.18 & 3.11  \\ 
  300 & 4 & 69,150,763 & 225 & 8,100,000 & 45 & 270 & 0.12 & 2.95  \\ 
  240 & 5 & 90,755,463 & 441 & 7,603,200 & 66 & 546 & 0.08 & 2.86  \\ 
  200 & 6 & 115,239,527 & 784 & 7,280,000 & 91 & 994 & 0.06 & 2.84  \\ 
  150 & 8 & 172,850,259 & 2,025 & 6,885,000 & 153 & 2,655 & 0.04 & 2.82  \\ 
 \hline
\end{tabular}
\end{table}

\begin{table}[ht]
 \caption{Comparison of the DoGIP and standard approach for weighted projection in 3D with $\dim\sV=912,673$ for all cases in this table. Here $1/N$ is the characteristic mesh size, $k$ is a polynomial order, and memory and computational efficiency are defined in \eqref{eq:memory_eff} and \eqref{eq:comp_eff}.}
 \label{tab:WP_3D}
 \centering
 \begin{tabular}{|rr|rr|rrr|rr|} 
  \hline
  \multicolumn{2}{|c|}{param.} & \multicolumn{2}{|c|}{standard FEM} & \multicolumn{3}{|c|}{DoGIP} &  \multicolumn{2}{|c|}{effectiveness} \\
  $N$ & $k$ & $\mem \MB{A}$ & $\mem \MB{A}_T$ & $\mem \MB{A}\diag$ & $\mem \MB{A}\diag_T$ & $\nnz \hMBB$ &  mem. & comp.  \\
  \hline
  96 & 1 & 27,843,551 & 16 & 53,084,160 & 10 & 16 & 1.91 & 13.40  \\ 
  48 & 2 & 52,423,011 & 100 & 23,224,320 & 35 & 116 & 0.44 & 6.36  \\ 
  32 & 3 & 87,684,871 & 400 & 16,515,072 & 84 & 520 & 0.19 & 4.91  \\ 
  24 & 4 & 135,540,615 & 1,225 & 13,685,760 & 165 & 1,729 & 0.10 & 4.38  \\ 
  \hline
 \end{tabular}
\end{table}

In the numerical test summarised in Tables~\ref{tab:WP_2D} and \ref{tab:WP_3D}, the number of elements N (or the characteristic mesh size $1/N$) and polynomial order $k$ have been considered as parameters.
They are chosen such that they provide the same dimension of the original space $\sV$.
One directly observes that the sparsity of the original systems $\MB{A}$ is significantly reduced with the growing polynomial order $k$.
On the other side, the memory requirements to store the DoGIP matrix $\MBA\diag$ are reduced compared to the conventional approach, which makes the DoGIP approach more efficient, especially for higher order polynomials.
Particularly, the memory demands for DoGIP in 2D are always smaller (regardless the polynomial order) than for the standard approach with linear polynomials; in 3D it is valid for DoGIP with polynomial order two and higher.
The memory effectiveness reaches the value of $0.04$ in 2D for $k=8$ and the value of $0.10$ in 3D for $k=4$.
On the other side the computational demands are increased for the DoGIP approach but decrease with increasing polynomial order $k$.
The values of computational efficiency, see \eqref{eq:comp_eff}, reach $2.81$ for $k=8$ in 2D and $4.25$ for $k=4$ in 3D.

Note that the memory requirements for the DoGIP approach decrease with increasing polynomial order.
It can be explained by the fact that the double grid space is continuous with overlapping degrees of freedom at the interface of the elements, but the DoGIP approach stores the matrices $\MBA_T\diag$ element-wise.
Therefore better results can be achieved for a global variant stated in Theorem~\ref{lem:ep_dogip} because the continuity of the double grid space can be incorporated.
This could reduce the memory and computationally requirements for smaller polynomial orders.

\subsection{Scalar elliptic equation}
In the case of scalar elliptic equation, the initial approximation space is again considered as a space of continuous piece-wise polynomials; however, the double-grid space consists of discontinuous piece-wise polynomials with reduced polynomial order $2(k-1)$ because of the gradient, i.e.\
\begin{align*}
\sV &= \fC_{\VN,k},
&
\sW &= \fD_{\VN,2(k-1)}.
\end{align*}

\begin{table}[ht]
 \caption{Comparison of the DoGIP and standard approach for scalar elliptic equation with isotropic material coefficients in 2D with $\dim\sV=1,442,401$ for all cases in this table. Here $1/N$ is the characteristic mesh size, $k$ is a polynomial order, and memory and computational efficiency are defined in \eqref{eq:memory_eff} and \eqref{eq:comp_eff}.}
 \label{tab:SE_2D}
 \centering
  \begin{tabular}{|rr|rr|rrr|rr|} 
   \hline
   \multicolumn{2}{|c|}{param.} & \multicolumn{2}{|c|}{standard FEM} & \multicolumn{3}{|c|}{DoGIP} &  \multicolumn{2}{|c|}{effectiveness} \\
   $N$ & $k$ & $\mem \MB{A}$ & $\mem \MB{A}_T$ & $\mem \MB{A}\diag$ & $\mem \MB{A}\diag_T$ & $\nnz \hMBB$ &  mem. & comp.  \\
   \hline
  1,200 & 1 & 21,616,803 & 9 & 11,520,000 & 4 & 4 & 0.53 & 1.14  \\ 
600 & 2 & 34,581,603 & 36 & 17,280,000 & 24 & 44 & 0.50 & 3.13  \\ 
400 & 3 & 50,426,403 & 100 & 19,200,000 & 60 & 212 & 0.38 & 5.80  \\ 
300 & 4 & 69,151,203 & 225 & 20,160,000 & 112 & 612 & 0.29 & 6.81  \\ 
240 & 5 & 90,756,003 & 441 & 20,736,000 & 180 & 1,516 & 0.23 & 8.10  \\ 
200 & 6 & 115,240,803 & 784 & 21,120,000 & 264 & 2,992 & 0.18 & 8.66  \\ 
150 & 8 & 172,850,403 & 2,025 & 21,600,000 & 480 & 9,232 & 0.12 & 9.88  \\ 
   \hline
  \end{tabular}
\end{table}

\begin{table}[ht]
 \caption{Comparison of the DoGIP and standard approach for scalar elliptic equation with isotropic material coefficients in 3D with $\dim\sV=912,673$ for all cases in this table. Here $1/N$ is the characteristic mesh size, $k$ is a polynomial order, and memory and computational efficiency are defined in \eqref{eq:memory_eff} and \eqref{eq:comp_eff}.}
 \label{tab:SE_3D}
 \centering
 \begin{tabular}{|rr|rr|rrr|rr|} 
  \hline
  \multicolumn{2}{|c|}{param.} & \multicolumn{2}{|c|}{standard FEM} & \multicolumn{3}{|c|}{DoGIP} &  \multicolumn{2}{|c|}{effectiveness} \\
  $N$ & $k$ & $\mem \MB{A}$ & $\mem \MB{A}_T$ & $\mem \MB{A}\diag$ & $\mem \MB{A}\diag_T$ & $\nnz \hMBB$ &  mem. & comp.  \\
  \hline
   96 & 1 & 27,843,555 & 16 & 47,775,744 & 9 & 6 & 1.72 & 3.55  \\ 
  48 & 2 & 52,423,203 & 100 & 59,719,680 & 90 & 126 & 1.14 & 6.03  \\ 
  32 & 3 & 87,773,283 & 400 & 61,931,520 & 315 & 1,014 & 0.71 & 9.79  \\ 
  24 & 4 & 135,589,539 & 1,225 & 62,705,664 & 756 & 4,590 & 0.46 & 11.82  \\ 
  \hline
 \end{tabular}
\end{table}

The numerical results are summarised in Tables~\ref{tab:SE_2D} and \ref{tab:SE_3D} again for two parameters (the characteristic mesh size $1/N$ and polynomial order $k$), which are chosen in a way to provide the same dimension of the original space $\sV$.
Contrary to the weighted projection in the previous subsection, the size of the double-grid space $\sW$ and thus the size of block-diagonal matrix $\MB{A}\diag$ grow with polynomial order $k$; however, the rate is slower than the memory requirements on the original matrix $\MB{A}$, which can be seen from the decreasing values of memory effectiveness.
Moreover, the memory requirements for DoGIP in 2D are always smaller (regardless the polynomial order) than for the standard approach with linear polynomials.
In 3D it is not observed any more, however, the DoGIP is always more memory efficient than the standard approach with polynomial order three.
Particularly the memory effectiveness reaches $0.12$ for $k=8$ in 2D and $0.45$ for $k=4$ in 3D.
Still the memory requirements fail to be beneficial for polynomial order $1$ and $2$ in 3D.
The memory requirements grow with polynomial order $k$ reaching the factor of $10$.

\subsection{Comparison to other matrix-free approaches}
In this section, the DoGIP approach that decomposes the system matrix into the interpolation and element-wise multiplication is compared to the existing matrix-free approaches \cite{Orszag1980,Kronbichler2012,Cantwell2011,Deville2002book,Huismann2017}.
Their difference lies in the interpolation operator.
The conventional matrix-free approaches interpolate over integration points while the DoGIP approach interpolates the trial functions on the double-grid space, which is spanned by the product of the approximation and the test space (possibly with a differential operator).

Therefore the DoGIP approach is beneficial whenever the dimension of the double grid space $\sW$ is smaller than the number of integration points.
In Figure~\ref{fig:nip}, the dimensions of the approximation and the double-grid space are depicted for different polynomial order $k$ along with the number of integration points of the Gauss type.
In the case of weighted projection, the product of the trial and the test function is a polynomial of the doubled order, i.e.\ $2k$.
It allows to choose a numerical integration, which is exact for a constant material coefficient. Following \cite{Witherden2015}, the number of quadrature points is significantly lower than the dimension of the double grid space $\sW$.

For sufficiently smooth material coefficients, the Gauss-type integration can still integrate the integrands accurately, however it may require to use higher order integration schemes.
For the integration that is exact for polynomials up to order $3k$, an efficient numerical integration on the simplexes can be chosen especially for lower order polynomials; higher order integration rules are more difficult to compute.
As an example, the number of integration points for the scheme in \cite{Karniadakis2013}, which is implemented in the software FEniCS for higher order polynomials, is comparable to the dimension of the double grid space in 2D but higher in 3D.

A totally different situation arises for nonsmooth integrands, which calls for some alternatives to Gauss-like integration rules.
Jumps in the functions naturally occur, e.g.\ when the material coefficients are heterogeneous or when the crack path (or the boundary of the computational domain) intersects a finite element.
In those situations a special integration technique has to be used as in the finite cell method \cite{Duster2008,Yang2012,Abedian2013b} incorporating an adaptive approach to decompose the element into subelements, which are integrated separately.
As these techniques require a significantly higher number of integration points than Gauss-like methods, the DoGIP approach provides an advantage compared to the conventional matrix-free approaches.

\begin{figure}[htb]
 \centering
 \subfloat[Weighted projection in 2D]{\includegraphics[scale=0.48]{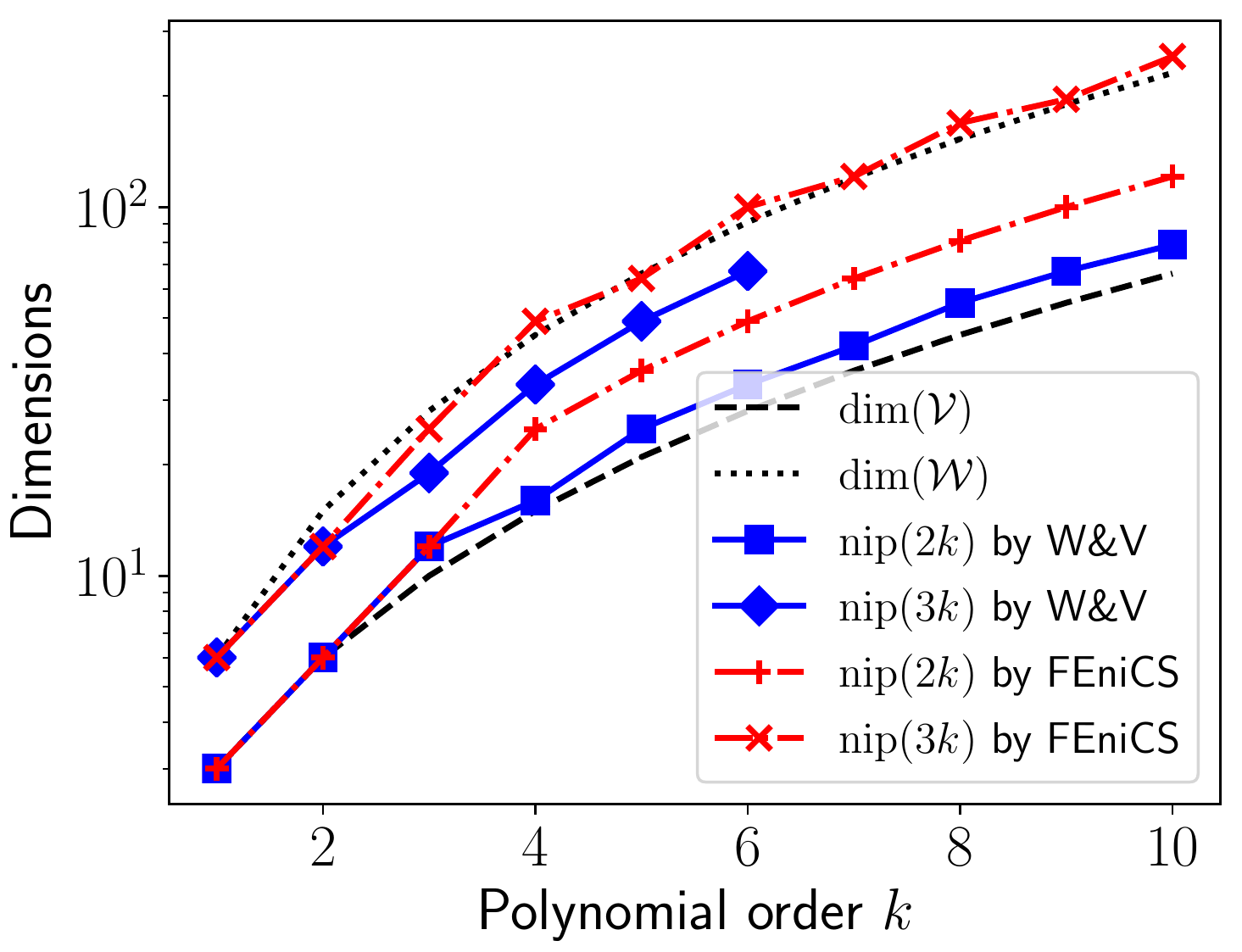}}
 \subfloat[Weighted projection in 3D]{\includegraphics[scale=0.48]{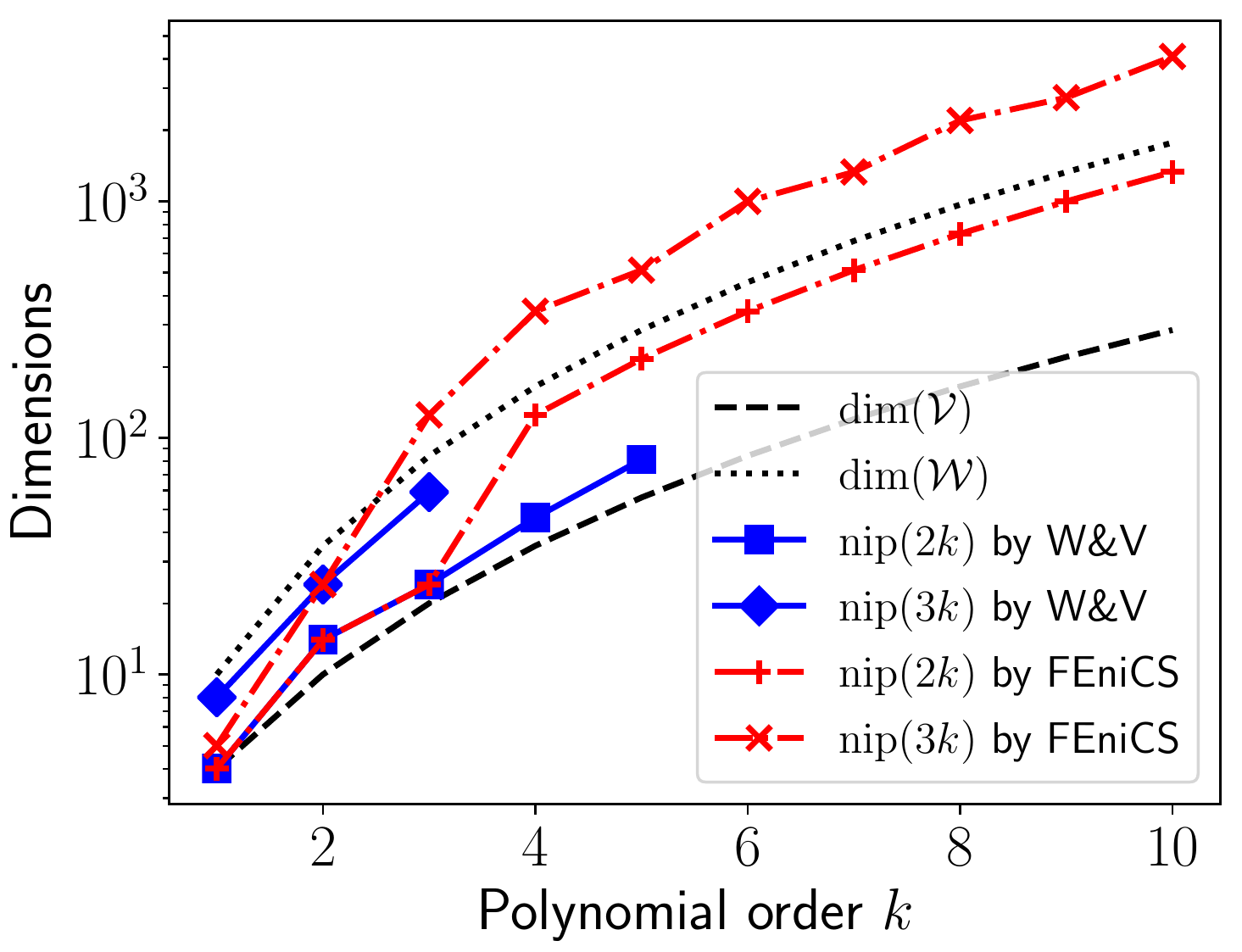}}
 \caption{Dimensions of approximation spaces and of integration points on a single element. The values $\mathrm{nip}(2k)$ and $\mathrm{nip}(3k)$ denote the number of integration points for a rule that can integrate exactly polynomials of order $2k$ or $3k$, resp.
  The rule W\&V (Witherden \&
  Vincent) denotes for a regular integration rule presented in \cite{Witherden2015}.
The integration rule implemented in FEniCS \cite{LoggWells2010a,Logg2007a,OlgaardWells2010b} differs for higher polynomial orders as it incorporates the collapsed integration rule presented in \cite{Karniadakis2013} by Karniadakis and Sherwin.
}
 \label{fig:nip}
\end{figure}

\section{Conclusion and discussion}

In this paper, the double-grid integration with interpolation-projection (DoGIP) is introduced and compared to the standard Galerkin discretisation of variational formulations.
This novel matrix-free discretisation approach introduced for the Fourier--Galerkin method in \cite{VoZeMa2014GarBounds,Vondrejc2015FFTimproved} is here recognised to be a more general framework.
Here, it is investigated within the finite element method (FEM) on simplexes for the weighted projection and the scalar elliptic equation.

The main results and observations are summarised:
\begin{itemize}
 \item The DoGIP method is based on a decomposition of the original system matrix into a (block) diagonal $\MB{A}\diag$ and an interpolation-projection matrix $\hMBB$ evaluated only on a reference element; interpolation is thus independent of the material coefficients or mesh distortions.
 
 \item The DoGIP approach is typically memory efficient compared to the original system, however the computational demands for FEM on simplexes are higher.
 
 \item The effectiveness of the traditional matrix-free approaches incorporating interpolation over quadrature points depends on the required number of integration points.
 The DoGIP splits the quadrature and interpolation into separate tasks.
 Therefore it is suitable for general problems when a higher number of integration points have to be used, e.g. for material coefficients that are discontinuous over elements.
 For problems with constant or smooth material coefficients, the standard matrix-free approaches remain the first option.
\end{itemize}

The proposed DoGIP approach has several possibilities for future research, which is discussed here:
\begin{itemize}
 \item Reducing the complexity of the interpolation-projection operators $\MB{B}$ by a special choice of the basis functions for a primal as well as a double-grid space.
 \item Application of the method to different discretisations and approximations such as FEM on quadrilaterals, spectral methods, or wavelets.
 \item Effective evaluation of interpolation-projection operator $\MB{B}$ using e.g.\ low-rank approximations.
 \item Investigation of special linear solvers such as the multigrid that could fit to the structure of the DoGIP systems.
\end{itemize}

\subsubsection*{Acknowledgements}
This paper was funded by the Deutsche Forschungsgemeinschaft (DFG, German Research Foundation) (project number MA2236/27-1) and the Czech Science Foundation (project number 17-04150J).


\begin{thebibliography}{10}
 \expandafter\ifx\csname url\endcsname\relax
 \def\url#1{\texttt{#1}}\fi
 \expandafter\ifx\csname urlprefix\endcsname\relax\def\urlprefix{URL }\fi
 \expandafter\ifx\csname href\endcsname\relax
 \def\href#1#2{#2} \def\path#1{#1}\fi
 
 \bibitem{VoZeMa2014GarBounds}
 J.~Vondřejc, J.~Zeman, I.~Marek, {Guaranteed upper-lower bounds on homogenized
  properties by FFT-based Galerkin method}, Computer Methods in Applied
 Mechanics and Engineering 297 (2015) 258--291.
 \newblock \href {http://dx.doi.org/10.1016/j.cma.2015.09.003}
 {\path{doi:10.1016/j.cma.2015.09.003}}.
 
 \bibitem{Vondrejc2015FFTimproved}
 J.~Vondřejc, {Improved guaranteed computable bounds on homogenized properties
  of periodic media by the Fourier–Galerkin method with exact integration},
 International Journal for Numerical Methods in Engineering 107~(13) (2016)
 1106--1135.
 \newblock \href {http://dx.doi.org/10.1002/nme.5199}
 {\path{doi:10.1002/nme.5199}}.
 
 \bibitem{Vondrejc2016DoGIP}
 J.~Vondřejc, {Double Grid Integration with Projection (DoGIP): Application to
  numerical homogenization by Fourier-Galerkin method}, in: Proceedings in
 Applied Mathematics and Mechanics, Vol.~16, WILEY-VCH Verlag, 2016, pp.
 561--562.
 \newblock \href {http://dx.doi.org/10.1002/pamm.201610269}
 {\path{doi:10.1002/pamm.201610269}}.
 
 \bibitem{Orszag1980}
 S.~A. Orszag, {Spectral Methods for Problems in Complex Geometries}, Journal of
 Computational Physics 37~(1) (1980) 70--92.
 \newblock \href {http://dx.doi.org/10.1016/0021-9991(80)90005-4}
 {\path{doi:10.1016/0021-9991(80)90005-4}}.
 
 \bibitem{Kronbichler2012}
 M.~Kronbichler, K.~Kormann, {A generic interface for parallel cell-based finite
  element operator application}, Computers \& Fluids 63 (2012) 135--147.
 \newblock \href {http://dx.doi.org/10.1016/J.COMPFLUID.2012.04.012}
 {\path{doi:10.1016/J.COMPFLUID.2012.04.012}}.
 
 \bibitem{Cantwell2011}
 C.~Cantwell, S.~Sherwin, R.~Kirby, P.~Kelly, {From h to p efficiently: Strategy
  selection for operator evaluation on hexahedral and tetrahedral elements},
 Computers \& Fluids 43~(1) (2011) 23--28.
 \newblock \href {http://dx.doi.org/10.1016/J.COMPFLUID.2010.08.012}
 {\path{doi:10.1016/J.COMPFLUID.2010.08.012}}.
 
 \bibitem{Deville2002book}
 M.~Deville, P.~Fischer, E.~Mund, {High-order methods for incompressible fluid
  flow}, Cambridge University Press, Cambridge, UK, 2002.
 
 \bibitem{Huismann2017}
 I.~Huismann, J.~Stiller, J.~Fr{\"{o}}hlich, {Factorizing the factorization -- a
  spectral-element solver for elliptic equations with linear operation count},
 Journal of Computational Physics 346 (2017) 437--448.
 \newblock \href {http://dx.doi.org/10.1016/J.JCP.2017.06.012}
 {\path{doi:10.1016/J.JCP.2017.06.012}}.
 
 \bibitem{Hesthaven1998}
 J.~S. Hesthaven, {From Electrostatics to Almost Optimal Nodal Sets for
  Polynomial Interpolation in a Simplex}, SIAM Journal on Numerical Analysis
 35~(2) (1998) 655--676.
 \newblock \href {http://dx.doi.org/10.1137/S003614299630587X}
 {\path{doi:10.1137/S003614299630587X}}.
 
 \bibitem{Gottlieb1977}
 D.~Gottlieb, S.~A. Orszag, {Numerical analysis of spectral methods: theory and
  applications}, Society for Industrial and Applied Mathematics, Philadelphia,
 PA, USA, 1977.
 
 \bibitem{Boyd2000book}
 J.~P. Boyd, {Chebyshev and Fourier spectral methods}, 2nd Edition, Courier
 Corporation, New York, 2001.
 
 \bibitem{Canuto2006SM_book}
 C.~Canuto, M.~Y. Hussaini, A.~Quarteroni, Z.~A. Thomas, {Spectral Methods:
  Fundamentals in Single Domains}, Scientific Computation, Springer, Berlin,
 Heidelberg, 2006.
 \newblock \href {http://dx.doi.org/10.1007/978-3-540-30726-6}
 {\path{doi:10.1007/978-3-540-30726-6}}.
 
 \bibitem{Pasquetti2006}
 R.~Pasquetti, F.~Rapetti, {Spectral Element Methods on Unstructured Meshes:
  Comparisons and Recent Advances}, Journal of Scientific Computing 27~(1-3)
 (2006) 377--387.
 \newblock \href {http://dx.doi.org/10.1007/s10915-005-9048-6}
 {\path{doi:10.1007/s10915-005-9048-6}}.
 
 \bibitem{Karniadakis2013}
 G.~E. Karniadakis, S.~J. Sherwin, {Spectral/hp element methods for
  computational fluid dynamics}, 2nd Edition, Oxford University Press, New
 York, 2013.
 
 \bibitem{Karniadakis1990}
 G.~E. Karniadakis, {Spectral element-Fourier methods for incompressible
  turbulent flows}, Computer Methods in Applied Mechanics and Engineering
 80~(1-3) (1990) 367--380.
 \newblock \href {http://dx.doi.org/10.1016/0045-7825(90)90041-J}
 {\path{doi:10.1016/0045-7825(90)90041-J}}.
 
 \bibitem{Vos2008}
 P.~E.~J. Vos, R.~van Loon, S.~J. Sherwin, {A comparison of fictitious domain
  methods appropriate for spectral/hp element discretisations}, Computer
 Methods in Applied Mechanics and Engineering 197~(25-28) (2008) 2275--2289.
 \newblock \href {http://dx.doi.org/10.1016/j.cma.2007.11.023}
 {\path{doi:10.1016/j.cma.2007.11.023}}.
 
 \bibitem{Vos2010}
 P.~E. Vos, S.~J. Sherwin, R.~M. Kirby, {From h to p efficiently: Implementing
  finite and spectral/hp element methods to achieve optimal performance for
  low- and high-order discretisations}, Journal of Computational Physics
 229~(13) (2010) 5161--5181.
 \newblock \href {http://dx.doi.org/10.1016/J.JCP.2010.03.031}
 {\path{doi:10.1016/J.JCP.2010.03.031}}.
 
 \bibitem{Dubiner1991}
 M.~Dubiner, {Spectral methods on triangles and other domains}, Journal of
 Scientific Computing 6~(4) (1991) 345--390.
 \newblock \href {http://dx.doi.org/10.1007/BF01060030}
 {\path{doi:10.1007/BF01060030}}.
 
 \bibitem{Sherwin1995}
 S.~J. Sherwin, G.~E. Karniadakis, {A new triangular and tetrahedral basis for
  high-order (hp) finite element methods}, International Journal for Numerical
 Methods in Engineering 38~(22) (1995) 3775--3802.
 \newblock \href {http://dx.doi.org/10.1002/nme.1620382204}
 {\path{doi:10.1002/nme.1620382204}}.
 
 \bibitem{Luo1998}
 H.~Luo, J.~D. Baum, R.~L{\"{o}}hner, {A Fast, Matrix-free Implicit Method for
  Compressible Flows on Unstructured Grids}, Journal of Computational Physics
 146~(2) (1998) 664--690.
 \newblock \href {http://dx.doi.org/10.1006/JCPH.1998.6076}
 {\path{doi:10.1006/JCPH.1998.6076}}.
 
 \bibitem{May2015a}
 D.~May, J.~Brown, L.~{Le Pourhiet}, {A scalable, matrix-free multigrid
  preconditioner for finite element discretizations of heterogeneous Stokes
  flow}, Computer Methods in Applied Mechanics and Engineering 290 (2015)
 496--523.
 \newblock \href {http://dx.doi.org/10.1016/J.CMA.2015.03.014}
 {\path{doi:10.1016/J.CMA.2015.03.014}}.
 
 \bibitem{Crivellini2011}
 A.~Crivellini, F.~Bassi, {An implicit matrix-free Discontinuous Galerkin solver
  for viscous and turbulent aerodynamic simulations}, Computers \& Fluids
 50~(1) (2011) 81--93.
 \newblock \href {http://dx.doi.org/10.1016/J.COMPFLUID.2011.06.020}
 {\path{doi:10.1016/J.COMPFLUID.2011.06.020}}.
 
 \bibitem{Alauzet2016}
 F.~Alauzet, {A parallel matrix-free conservative solution interpolation on
  unstructured tetrahedral meshes}, Computer Methods in Applied Mechanics and
 Engineering 299 (2016) 116--142.
 \newblock \href {http://dx.doi.org/10.1016/J.CMA.2015.10.012}
 {\path{doi:10.1016/J.CMA.2015.10.012}}.
 
 \bibitem{Strang1972varcrime}
 G.~Strang, {Variational crimes in the finite element method}, in: A.~K. Aziz
 (Ed.), The mathematical foundations of the finite element method with
 applications to partial differential equations, Academic Press, New York, NY,
 USA, 1972, pp. 689--710.
 
 \bibitem{Ciarlet1991HNAFEM}
 P.~G. Ciarlet, {Handbook of Numerical Analysis: Finite Element Methods},
 Elsevier Science Publishers B.V. (North-Holland), 1991.
 
 \bibitem{Silvester1970}
 P.~Silvester, {Symmetric quadrature formulae for simplexes}, Mathematics of
 Computation 24~(109) (1970) 95--95.
 \newblock \href {http://dx.doi.org/10.1090/S0025-5718-1970-0258283-6}
 {\path{doi:10.1090/S0025-5718-1970-0258283-6}}.
 
 \bibitem{Grundmann1978}
 A.~Grundmann, H.~M. M{\"{o}}ller, {Invariant Integration Formulas for the n
  -Simplex by Combinatorial Methods}, SIAM Journal on Numerical Analysis 15~(2)
 (1978) 282--290.
 \newblock \href {http://dx.doi.org/10.1137/0715019}
 {\path{doi:10.1137/0715019}}.
 
 \bibitem{Dunavant1985}
 D.~A. Dunavant, {High degree efficient symmetrical Gaussian quadrature rules
  for the triangle}, International Journal for Numerical Methods in Engineering
 21~(6) (1985) 1129--1148.
 \newblock \href {http://dx.doi.org/10.1002/nme.1620210612}
 {\path{doi:10.1002/nme.1620210612}}.
 
 \bibitem{Wandzurat2003}
 S.~Wandzurat, H.~Xiao, {Symmetric quadrature rules on a triangle}, Computers \&
 Mathematics with Applications 45~(12) (2003) 1829--1840.
 \newblock \href {http://dx.doi.org/10.1016/S0898-1221(03)90004-6}
 {\path{doi:10.1016/S0898-1221(03)90004-6}}.
 
 \bibitem{Taylor2007}
 M.~A. Taylor, B.~A. Wingate, L.~P. Bos, {A Cardinal Function Algorithm for
  Computing Multivariate Quadrature Points}, SIAM Journal on Numerical Analysis
 45~(1) (2007) 193--205.
 \newblock \href {http://dx.doi.org/10.1137/050625801}
 {\path{doi:10.1137/050625801}}.
 
 \bibitem{Zhang2009}
 L.~Zhang, T.~Cui, H.~Liu, {A set of symmetric quadrature rules on triangles and
  tetrahedra}, Journal of Computational Mathematics 27 (2009) 89--96.
 \newblock \href {http://dx.doi.org/10.2307/43693493}
 {\path{doi:10.2307/43693493}}.
 
 \bibitem{Witherden2015}
 F.~Witherden, P.~Vincent, {On the identification of symmetric quadrature rules
  for finite element methods}, Computers \& Mathematics with Applications
 69~(10) (2015) 1232--1241.
 \newblock \href {http://dx.doi.org/10.1016/J.CAMWA.2015.03.017}
 {\path{doi:10.1016/J.CAMWA.2015.03.017}}.
 
 \bibitem{Novak1996}
 E.~Novak, K.~Ritter, {High dimensional integration of smooth functions over
  cubes}, Numerische Mathematik 75~(1) (1996) 79--97.
 \newblock \href {http://dx.doi.org/10.1007/s002110050231}
 {\path{doi:10.1007/s002110050231}}.
 
 \bibitem{Trefethen2008}
 L.~Trefethen, {Is Gauss quadrature better than Clenshaw--Curtis?}, SIAM review
 50~(1) (2008) 67--87.
 \newblock \href {http://dx.doi.org/10.1137/060659831}
 {\path{doi:10.1137/060659831}}.
 
 \bibitem{Xiang2012}
 S.~Xiang, F.~Bornemann, {On the Convergence Rates of Gauss and Clenshaw--Curtis
  Quadrature for Functions of Limited Regularity}, SIAM Journal on Numerical
 Analysis 50~(5) (2012) 2581--2587.
 \newblock \href {http://dx.doi.org/10.1137/120869845}
 {\path{doi:10.1137/120869845}}.
 
 \bibitem{Duster2008}
 A.~D{\"{u}}ster, J.~Parvizian, Z.~Yang, E.~Rank, {The finite cell method for
  three-dimensional problems of solid mechanics}, Computer Methods in Applied
 Mechanics and Engineering 197~(45-48) (2008) 3768--3782.
 \newblock \href {http://dx.doi.org/10.1016/j.cma.2008.02.036}
 {\path{doi:10.1016/j.cma.2008.02.036}}.
 
 \bibitem{Yang2012}
 Z.~Yang, M.~Ruess, S.~Kollmannsberger, A.~D{\"{u}}ster, E.~Rank, {An efficient
  integration technique for the voxel-based finite cell method}, International
 Journal for Numerical Methods in Engineering 91~(5) (2012) 457--471.
 \newblock \href {http://dx.doi.org/10.1002/nme.4269}
 {\path{doi:10.1002/nme.4269}}.
 
 \bibitem{Abedian2013b}
 A.~Abedian, J.~Parvizian, A.~D{\"{u}}ster, H.~Khademyzadeh, E.~Rank,
 {Performance of different integration schemes in facing discontinuities in
  the finite cell method}, International Journal of Computational Methods
 10~(03).
 \newblock \href {http://dx.doi.org/10.1142/S0219876213500023}
 {\path{doi:10.1142/S0219876213500023}}.
 
 \bibitem{Dauge2015}
 M.~Dauge, A.~D{\"{u}}ster, E.~Rank, {Theoretical and Numerical Investigation of
  the Finite Cell Method}, Journal of Scientific Computing 65~(3) (2015)
 1039--1064.
 \newblock \href {http://dx.doi.org/10.1007/s10915-015-9997-3}
 {\path{doi:10.1007/s10915-015-9997-3}}.
 
 \bibitem{Kronbichler2017}
 M.~Kronbichler, K.~Kormann, {Fast matrix-free evaluation of discontinuous
  Galerkin finite element operators} (2017).
 \newblock \href {http://arxiv.org/abs/1711.03590} {\path{arXiv:1711.03590}}.
 
 \bibitem{VoZeMa2014FFTH}
 J.~Vondřejc, J.~Zeman, I.~Marek, {An FFT-based Galerkin method for
  homogenization of periodic media}, Computers \& Mathematics with Applications
 68~(3) (2014) 156--173.
 \newblock \href {http://dx.doi.org/10.1016/j.camwa.2014.05.014}
 {\path{doi:10.1016/j.camwa.2014.05.014}}.
 
 \bibitem{Schneider2016hexa}
 M.~Schneider, D.~Merkert, M.~Kabel, {FFT-based homogenization for
  microstructures discretized by linear hexahedral elements}, International
 Journal for Numerical Methods in Engineering 109~(10) (2017) 1461--1489.
 \newblock \href {http://dx.doi.org/10.1002/nme.5336}
 {\path{doi:10.1002/nme.5336}}.
 
 \bibitem{Ibrahimbegovic2014}
 A.~Ibrahimbegovi{\'{c}}, R.~Niekamp, C.~Kassiotis, D.~Markovi{\v{c}}, H.~G.
 Matthies, {Code-coupling strategy for efficient development of computer
  software in multiscale and multiphysics nonlinear evolution problems in
  computational mechanics}, Advances in Engineering Software 72 (2014) 8--17.
 \newblock \href {http://dx.doi.org/10.1016/j.advengsoft.2013.06.014}
 {\path{doi:10.1016/j.advengsoft.2013.06.014}}.
 
 \bibitem{Matous2016}
 K.~Matou{\v{s}}, M.~G.~D. Geers, V.~G. Kouznetsova, A.~Gillman, {A review of
  predictive nonlinear theories for multiscale modeling of heterogeneous
  materials}, Journal of Computational Physics 330 (2016) 192--220.
 \newblock \href {http://dx.doi.org/10.1016/j.jcp.2016.10.070}
 {\path{doi:10.1016/j.jcp.2016.10.070}}.
 
 \bibitem{Kanoute2009}
 P.~Kanout{\'{e}}, D.~P. Boso, J.~L. Chaboche, B.~A. Schrefler, {Multiscale
  Methods for Composites: A Review}, Archives of Computational Methods in
 Engineering 16~(1) (2009) 31--75.
 \newblock \href {http://dx.doi.org/10.1007/s11831-008-9028-8}
 {\path{doi:10.1007/s11831-008-9028-8}}.
 
 \bibitem{Abdulle2012a}
 A.~Abdulle, W.~E, B.~Engquist, E.~Vanden-Eijnden, {The heterogeneous multiscale
  method}, Acta Numerica 21 (2012) 1--87.
 \newblock \href {http://dx.doi.org/10.1017/S0962492912000025}
 {\path{doi:10.1017/S0962492912000025}}.
 
 \bibitem{Gokuzum2017}
 F.~S. G{\"{o}}k{\"{u}}z{\"{u}}m, M.-A. Keip, {An Algorithmically Consistent
  Macroscopic Tangent Operator for FFT-based Computational Homogenization},
 International Journal for Numerical Methods in Engineering\href
 {http://dx.doi.org/10.1002/nme.5627} {\path{doi:10.1002/nme.5627}}.
 
 \bibitem{Sigmund2013approaches}
 O.~Sigmund, K.~Maute, {Topology optimization approaches: A comparative review},
 Structural and Multidisciplinary Optimization 48~(6) (2013) 1031--1055.
 \newblock \href {http://dx.doi.org/10.1007/s00158-013-0978-6}
 {\path{doi:10.1007/s00158-013-0978-6}}.
 
 \bibitem{Ernst2010}
 O.~G. Ernst, E.~Ullmann, {Stochastic Galerkin Matrices}, SIAM Journal on Matrix
 Analysis and Applications 31~(4) (2010) 1848--1872.
 \newblock \href {http://dx.doi.org/10.1137/080742282}
 {\path{doi:10.1137/080742282}}.
 
 \bibitem{Matthies2005}
 H.~G. Matthies, A.~Keese, {Galerkin methods for linear and nonlinear elliptic
  stochastic partial differential equations}, Computer Methods in Applied
 Mechanics and Engineering 194~(12-16) (2005) 1295--1331.
 \newblock \href {http://dx.doi.org/10.1016/j.cma.2004.05.027}
 {\path{doi:10.1016/j.cma.2004.05.027}}.
 
 \bibitem{Stuart2010}
 A.~M. Stuart, {Inverse problems: A Bayesian perspective}, Acta Numerica 19
 (2010) 451--559.
 \newblock \href {http://dx.doi.org/10.1017/S0962492910000061}
 {\path{doi:10.1017/S0962492910000061}}.
 
 \bibitem{VoMa2018CdE}
 J.~Vondřejc, H.~G. Matthies, {Accurate computation of conditional expectation
  for highly non-linear problems} (2019).
 \newblock \href {http://arxiv.org/abs/1806.03234} {\path{arXiv:1806.03234}}.
 
 \bibitem{Abedian2013a}
 A.~Abedian, J.~Parvizian, A.~D{\"{u}}ster, E.~Rank, {The finite cell method for
  the $J^2$ flow theory of plasticity}, Finite Elements in Analysis and Design
 69 (2013) 37--47.
 \newblock \href {http://dx.doi.org/10.1016/j.finel.2013.01.006}
 {\path{doi:10.1016/j.finel.2013.01.006}}.
 
 \bibitem{Fehn2018fluid}
 N.~Fehn, W.~A. Wall, M.~Kronbichler, {Robust and efficient discontinuous
  Galerkin methods for under-resolved turbulent incompressible flows}, Journal
 of Computational Physics 372 (2018) 667--693.
 \newblock \href {http://dx.doi.org/10.1016/J.JCP.2018.06.037}
 {\path{doi:10.1016/J.JCP.2018.06.037}}.
 
 \bibitem{Fenics_v15}
 M.~S. Aln{\ae}s, J.~Blechta, J.~Hake, A.~Johansson, B.~Kehlet, A.~Logg,
 C.~Richardson, J.~Ring, M.~E. Rognes, G.~N. Wells, {The FEniCS Project
  Version 1.5}, Archive of Numerical Software 3~(100) (2015) 9--23.
 \newblock \href {http://dx.doi.org/10.11588/ANS.2015.100.20553}
 {\path{doi:10.11588/ANS.2015.100.20553}}.
 
 \bibitem{LoggWells2010a}
 A.~Logg, G.~N. Wells, {DOLFIN: Automated Finite Element Computing}, ACM
 Transactions on Mathematical Software 37~(2).
 \newblock \href {http://dx.doi.org/10.1145/1731022.1731030}
 {\path{doi:10.1145/1731022.1731030}}.
 
 \bibitem{Logg2007a}
 A.~Logg, {Automating the Finite Element Method}, Archives of Computational
 Methods in Engineering 14~(2) (2007) 93--138.
 \newblock \href {http://dx.doi.org/10.1007/s11831-007-9003-9}
 {\path{doi:10.1007/s11831-007-9003-9}}.
 
 \bibitem{OlgaardWells2010b}
 K.~B. {\O}lgaard, G.~N. Wells, {Optimizations for quadrature representations of
  finite element tensors through automated code generation}, ACM Transactions
 on Mathematical Software 37~(1) (2010) 1--23.
 \newblock \href {http://dx.doi.org/10.1145/1644001.1644009}
 {\path{doi:10.1145/1644001.1644009}}.
 
\end{thebibliography}

\end{document}